\DeclareMathOperator{\hight}{ht}
\newcommand{\Spec}{\operatorname{Spec}}
\newcommand{\QSpec}{\operatorname{QSpec}}
\newcommand{\GD}{\operatorname{GD}}
\renewcommand{\dim}{\operatorname{dim}}
\newcommand{\Max}{\operatorname{Max}}
\newcommand{\QMax}{\operatorname{QMax}}
\newtheorem{thm}{Theorem}[section]
\newtheorem{cor}[thm]{Corollary}
\newtheorem{lem}[thm]{Lemma}
\newtheorem{prop}[thm]{Proposition}
\newtheorem{defn}[thm]{Definition}
\newtheorem{rem}[thm]{Remark}
\begin{document}

\bibliographystyle{amsplain}

\date{}

\author{Parviz Sahandi}

\address{Department of Mathematics, University of Tabriz, Tabriz, Iran and School of Mathematics, Institute for
Research in Fundamental Sciences (IPM), Tehran Iran.}

\email{sahandi@tabrizu.ac.ir, sahandi@ipm.ir}

\keywords{Semistar operation, star operation, Krull dimension,
strong S-domain, Jaffard domain, universally catenarian, catenary}

\subjclass[2000]{Primary 13G05, 13B24, 13A15, 13C15}

\title[Universally catenarian domains and strong S-domains]{Universally catenarian integral domains, strong S-domains and semistar operations}

\begin{abstract} Let $D$ be an integral domain and $\star$ a semistar operation stable and of finite type on it.
In this paper, we are concerned with the study of the semistar
(Krull) dimension theory of polynomial rings over $D$. We
introduce and investigate the notions of $\star$-universally
catenarian and $\star$-stably strong S-domains and prove that,
every $\star$-locally finite dimensional Pr\"{u}fer
$\star$-multiplication domain is $\star$-universally catenarian,
and this implies $\star$-stably strong S-domain. We also give new
characterizations of $\star$-quasi-Pr\"{u}fer domains introduced
recently by Chang and Fontana, in terms of these notions.
\end{abstract}

\maketitle

\section{Introduction}

\noindent The concepts of S(eidenberg)-domains and strong S-domains
are crucial ones and were introduced by Kaplansky \cite[Page 26]{K}.
Recall that an integral domain $D$ is an \emph{S-domain} if for each
prime ideal $P$ of $D$ of height one the extension $PD[X]$ to the
polynomial ring in one variable is also of height one. A
\emph{strong S-domain} is a domain $D$ such that, $D/P$ is an
S-domain, for each prime $P$ of $D$. One of the reasons why
Kaplansky introduced the notion of strong S-domain was to treat the
classes of Noetherian domains and Pr\"{u}fer domains in a unified
frame. Moreover, if $D$ belongs to one of the two classes of
domains, then the following dimension formula holds:
$\dim(D[X_1,\cdots,X_n])=n+\dim(D)$ (cf., \cite[Theorem 9]{S1} and
\cite[Theorem 4]{S2}). The integral domain $D$ is called a
\emph{Jaffard domain} if $\dim(D)<\infty$ and
$\dim(D[X_1,\cdots,X_n])=n+\dim(D)$ for each positive integer $n$.
So that finite dimensional Noetherian or Pr\"{u}fer domains are
Jaffard domains. Kaplansky observed that for $n=1$ and for $D$ a
strong S-domain then $\dim(D[X_1])=1+\dim(D)$ \cite[Theorem 39]{K}.
The strong S-property is not stable, in general under polynomial
extensions (cf. \cite{BMRH}). In \cite{MM}, Malik and Mott, defined
and studied the stably strong S-domains. A domain $D$ is called a
\emph{stably strong S-domain} if $D[X_1,\cdots,X_n]$ is a strong
S-domain for each $n\geq1$. Note that the class of Jaffard domains
contains the class of stably strong S-domains. The class of stably
strong S-domains contains an important class of universally
catenarian domains. Recall that a domain $D$, is called
\emph{catenarian}, if for each pair $P\subset Q$ of prime ideals of
$D$, any two saturated chain of prime ideals between $P$ and $Q$
have the same finite length. If for each $n\geq1$, the polynomial
ring $D[X_1,\cdots,X_n]$ is catenary, then $D$ is said to be
\emph{universally catenarian} (cf. \cite{BDF, BF}).

For several decades, star operations, as described in
\cite[Section 32]{G}, have proven to be an essential tool in
\emph{multiplicative ideal theory}, for studying various classes
of domains. In \cite{OM}, Okabe and Matsuda introduced the concept
of a semistar operation to extend the notion of a star operation.
Since then, semistar operations have been extensively studied
and, because of a greater flexibility than star operations, have
permitted a finer study and new classifications of special
classes of integral domains.

This manuscript is a sequel to \cite{S}. Given a semistar
operation $\star$ on $D$ and let $\widetilde{\star}$ be the
stable semistar operation of finite type canonically associated
to $\star$ (the definitions are recalled later in this section),
it is possible to define a semistar operation stable and of
finite type $\star[X]$ on $D[X]$ (cf. \cite{S}) such that:
$$\widetilde{\star}\text{-}
\dim(D)+1\leq\star[X]\text{-}\dim(D[X])\leq2(\widetilde{\star}\text{-}\dim(D))+1.$$
We say that a domain $D$, is an \emph{$\widetilde{\star}$-Jaffard
domain} if $\widetilde{\star}\text{-}\dim(D)<\infty$ and
$$\star[X_1,\cdots,X_n]\text{-}\dim(D[X_1,\cdots,X_n])=\widetilde{\star}\text{-}\dim(D)+n,$$
for each positive integer $n$. Every $\widetilde{\star}$-Noetherian
and P$\star$MDs are $\widetilde{\star}$-Jaffard domains (cf.
\cite{S}). In this paper we define and study two subclass of
$\widetilde{\star}$-Jaffard domains. Namely in Sections 2 and 3, we
define and study $\widetilde{\star}$-stably strong S-domains and
$\widetilde{\star}$-universally catenarian domains. In Section 4 we
give new characterizations of $\widetilde{\star}$-quasi-Pr\"{u}fer
domains in terms of $\widetilde{\star}$-stably strong S-domains and
$\widetilde{\star}$-universally catenarian domains.

To facilitate the reading of the introduction and of the paper, we
first review some basic facts on semistar operations. Let $D$ denote
a (commutative integral) domain with identity and let $K$ be the
quotient field of $D$. Denote by $\overline{\mathcal{F}}(D)$ the set
of all nonzero $D$-submodules of $K$, and by $\mathcal{F}(D)$ the
set of all nonzero \emph{fractional ideals} of $D$; i.e.,
$E\in\mathcal{F}(D)$ if $E\in\overline{\mathcal{F}}(D)$ and there
exists a nonzero element $r\in D$ with $rE\subseteq D$. Let $f(D)$
be the set of all nonzero finitely generated fractional ideals of
$D$. Obviously,
$f(D)\subseteq\mathcal{F}(D)\subseteq\overline{\mathcal{F}}(D)$. As
in \cite{OM}, a {\it semistar operation on} $D$ is a map
$\star:\overline{\mathcal{F}}(D)\rightarrow\overline{\mathcal{F}}(D)$,
$E\mapsto E^{\star}$, such that, for all $x\in K$, $x\neq 0$, and
for all $E, F\in\overline{\mathcal{F}}(D)$, the following three
properties hold:
\begin{itemize}
\item [$\star_1$]:  $(xE)^{\star}=xE^{\star}$;
\item [$\star_2$]:  $E\subseteq F$ implies that $E^{\star}\subseteq
F^{\star}$;
\item [$\star_3$]:  $E\subseteq E^{\star}$ and
$E^{\star\star}:=(E^{\star})^{\star}=E^{\star}$.
\end{itemize}
It is convenient to say that a \emph{(semi)star operation on} $D$ is
a semistar operation which, when restricted to $\mathcal{F}(D)$, is
a star operation (in the sense of \cite[Section 32]{G}). It is easy
to see that a semistar operation $\star$ on $D$ is a (semi)star
operation on $D$ if and only if $D^{\star}=D$.

Let $\star$ be a semistar operation on the domain $D$. For every
$E\in\overline{\mathcal{F}}(D)$, put $E^{\star_f}:=\bigcup
F^{\star}$, where the union is taken over all finitely generated
$F\in f(D)$ with $F\subseteq E$. It is easy to see that $\star_f$ is
a semistar operation on $D$, and ${\star_f}$ is called \emph{the
semistar operation of finite type associated to} $\star$. Note that
$(\star_f)_f=\star_f$. A semistar operation $\star$ is said to be of
\emph{finite type} if $\star=\star_f$; in particular ${\star_f}$ is
of finite type. We say that a nonzero ideal $I$ of $D$ is a
\emph{quasi-$\star$-ideal} of $D$, if $I^{\star}\cap D=I$; a
\emph{quasi-$\star$-prime} (ideal of $D$), if $I$ is a prime
quasi-$\star$-ideal of $D$; and a \emph{quasi-$\star$-maximal}
(ideal of $D$), if $I$ is maximal in the set of all proper
quasi-$\star$-ideals of $D$. Each quasi-$\star$-maximal ideal is a
prime ideal. It was shown in \cite[Lemma 4.20]{FH} that if
$D^{\star} \neq K$, then each proper quasi-$\star_f$-ideal of $D$ is
contained in a quasi-$\star_f$-maximal ideal of $D$. We denote by
$\QMax^{\star}(D)$ (resp., $\QSpec^{\star}(D)$) the set of all
quasi-$\star$-maximal ideals (resp., quasi-$\star$-prime ideals) of
$D$. When $\star$ is a (semi)star operation, it is easy to see that
the notion of quasi-$\star$-ideal is equivalent to the classical
notion of $\star$-ideal (i.e., a nonzero ideal $I$ of $D$ such that
$I^{\star}=I$).

If $\Delta$ is a set of prime ideals of a domain $D$, then there is
an associated semistar operation on $D$, denoted by
$\star_{\Delta}$, defined as follows:
$$E^{\star_{\Delta}}:=\cap\{ED_P|P\in\Delta\}\text{, for each }E\in\overline{\mathcal{F}}(D).$$
If $\Delta=\emptyset$, let $E^{\star_{\Delta}}:=K$ for each
$E\in\overline{\mathcal{F}}(D)$. One calls $\star_{\Delta}$ the
\emph{spectral semistar operation associated to} $\Delta$. A
semistar operation $\star$ on a domain $D$ is called a
\emph{spectral semistar operation} if there exists a subset
$\Delta$ of the prime spectrum of $D$, Spec$(D)$, such that
$\star=\star_{\Delta}$. When $\Delta:=\QMax^{\star_f}(D)$, we set
$\widetilde{\star}:=\star_{\Delta}$; i.e.,
$$E^{\widetilde{\star}}:= \cap\{ED_P|P\in\QMax^{\star_f}(D)\}\text{, for each
}E\in\overline{\mathcal{F}}(D).$$

It has become standard to say that a semistar operation $\star$ is
{\it stable} if $(E\cap F)^{\star}=E^{\star}\cap F^{\star}$ for all
$E$, $F\in \overline{\mathcal{F}}(D)$. All spectral semistar
operations are stable \cite[Lemma 4.1(3)]{FH}. In particular, for
any semistar operation $\star$, we have that $\widetilde{\star}$ is
a stable semistar operation of finite type \cite[Corollary 3.9]{FH}.

The most widely studied (semi)star operations on $D$ have been
the identity $d_D$, and $v_D$, $t_D:=(v_D)_f$, and
$w_D:=\widetilde{v_D}$ operations, where
$E^{v_D}:=(E^{-1})^{-1}$, with $E^{-1}:=(D:E):=\{x\in
K|xE\subseteq D\}$.

Let $\star$ be a semistar operation on a domain $D$. The
$\star$-{\it Krull dimension of} $D$ is defined as
$$\star\text{-}\dim(D):=\sup\left\{n\bigg| \begin{array}{l} (0)=P_0\subset
P_1\subset\cdots\subset P_n\text{ where } P_i \text{ is a}
\\\text{ quasi-}\star\text{-prime ideal of }D\text{ for } 1\leq i\leq n
 \end{array} \right\}.$$
It is known (see \cite[Lemma 2.11]{EFP}) that
$$
\widetilde{\star}\text{-}\dim(D)=\sup\{\hight (P) \mid P\text{ is a
quasi-}\widetilde{\star}\text{-prime ideal of } D\}.
$$
Thus, if $\star =d_D$, then
$\widetilde{\star}\text{-}\dim(D)=\star\text{-}\dim(D)$ coincides
with $\dim(D)$, the usual (Krull) dimension of $D$.

Let $\star$ be a semistar operation on a domain $D$. Recall from
\cite[Section 3]{EFP} that $D$ is said to be a
\emph{$\star$-Noetherian domain}, if $D$ satisfies the ascending
chain condition on quasi-$\star$-ideals. Also recall from \cite{FJS}
that, $D$ is called a \emph{Pr\"{u}fer $\star$-multiplication
domain} (for short, a P$\star$MD) if each finitely generated ideal
of $D$ is \emph{$\star_f$-invertible}; i.e., if
$(II^{-1})^{\star_f}=D^{\star}$ for all $I\in f(D)$. When $\star=v$,
we recover the classical notion of P$v$MD; when $\star=d_D$, the
identity (semi)star operation, we recover the notion of Pr\"{u}fer
domain.

Let $D$ be a domain, $\star$ a semistar operation on $D$, $T$ an
overring of $D$, and $\iota:D\hookrightarrow T$ the corresponding
inclusion map. In a canonical way, one can define an associated
semistar operation $\star_{\iota}$ on $T$, by setting $E\mapsto
E^{\star_{\iota}}:=E^{\star}$, for each
$E\in\overline{\mathcal{F}}(T)(\subseteq\overline{\mathcal{F}}(D))$
\cite[Proposition 2.8]{FL2}.

Throughout this paper, $D$ denotes a domain and $\star$ is a
semistar operation on $D$.

\section{The $\star$-strong S-domains}

Let $D$ be an integral domain with quotient field $K$, let $X$, $Y$
be two indeterminates over $D$ and let $\star$ be a semistar
operation on D. Set $D_1:=D[X]$, $K_1:=K(X)$ and take the following
subset of $\Spec(D_1)$:
$$\Theta_1^{\star}:=\{Q_1\in\Spec(D_1)|\text{ }Q_1\cap D=(0)\text{ or }(Q_1\cap D)^{\star_f}\subsetneq D^{\star}\}.$$
Set
$\mathfrak{S}_1^{\star}:=\mathcal{S}(\Theta_1^{\star}):=D_1[Y]\backslash(\bigcup\{Q_1[Y]
|Q_1\in\Theta_1^{\star}\})$ and:
$$E^{\circlearrowleft_{\mathfrak{S}_1^{\star}}}:=E[Y]_{\mathfrak{S}_1^{\star}}\cap
K_1, \text{   for all }E\in \overline{\mathcal{F}}(D_1).$$

It is proved in \cite[Theorem 2.1]{S} that the mapping
$\star[X]:=\circlearrowleft_{\mathfrak{S}_1^{\star}}:
\overline{\mathcal{F}}(D_1)\to\overline{\mathcal{F}}(D_1)$,
$E\mapsto E^{\star[X]}$ is a
stable semistar operation of finite type on $D[X]$, i.e.,
$\widetilde{\star[X]}=\star[X]$. It is also proved that
$\widetilde{\star}[X]=\star_f[X]=\star[X]$, $d_D[X]=d_{D[X]}$ and
$\QSpec^{\star[X]}(D[X])=\Theta_1^{\star}\backslash\{0\}$. If
$X_1,\cdots,X_r$ are indeterminates over $D$, for $r\geq2$, we let
$$\star[X_1,\cdots,X_r]:=(\star[X_1,\cdots,X_{r-1}])[X_r],$$ where
$\star[X_1,\cdots,X_{r-1}]$ is a stable semistar operation of finite
type on $D[X_1,\cdots,X_{r-1}]$. For an integer $r$, put $\star[r]$
to denote $\star[X_1,\cdots,X_r]$ and $D[r]$ to denote
$D[X_1,\cdots,X_r]$.

As an extension of a result by Seidenberg \cite[Theorem 2]{S1}, we
showed in \cite[Theorem 3.1]{S} that: if
$n:=\widetilde{\star}$-$\dim(D)$, then
$n+1\leq\star[X]\text{-}\dim(D[X])\leq 2n+1.$ On the other hand, it
is shown in \cite[Theorem 3.8 and Corollary 4.11]{S}, that if $D$ is
a $\widetilde{\star}$-Noetherian domain or a P$\star$MD and $n$ is
any positive integer, then
$\star[n]\text{-}\dim(D[n])=n+\widetilde{\star}\text{-}\dim(D)$,
that is $D$ is an $\widetilde{\star}$-Jaffard domain. Now we define
and study a subclass of $\widetilde{\star}$-Jaffard domains.

\begin{defn}
The domain $D$ is called an \emph{$\star$-S-domain}, if each height
one quasi-$\star$-prime ideal $P$ of $D$, extends to a height one
quasi-$\star[X]$-prime ideal $P[X]$ of the polynomial ring $D[X]$.
We say that $D$ is an \emph{$\star$-strong S-domain}, if each pair
of adjacent quasi-$\star$-prime ideals $P_1\subset P_2$ of $D$,
extend to a pair of adjacent quasi-$\star[X]$-prime ideals
$P_1[X]\subset P_2[X]$, of $D[X]$. If for each $n\geq1$, the
polynomial ring $D[n]$ is a $\star[n]$-strong S-domain, then $D$ is
said to be an \emph{$\star$-stably strong S-domain}.
\end{defn}

Note that the notion of $d$-S-domain (resp. $d$-strong S-domain,
$d$-stably strong S-domain) coincides with the ``classical'' notion
of S-domain (resp. strong S-domain, stably strong S-domain) \cite{K,
MM}.

\begin{prop} Suppose that $D$ is an $\widetilde{\star}$-strong S-domain,
and $\widetilde{\star}$-$\dim(D)=n$ is finite. Then
$\star[X]$-$\dim(D[X])=n+1$.
\end{prop}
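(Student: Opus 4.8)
The plan is to prove the two inequalities separately. The bound $\star[X]$-$\dim(D[X])\geq n+1$ is already available: it is the left-hand inequality of \cite[Theorem 3.1]{S} applied with $n=\widetilde{\star}$-$\dim(D)$. All the content therefore lies in proving $\star[X]$-$\dim(D[X])\leq n+1$. The first reduction is to pass from $\star$-dimension to ordinary height. Since $\star[X]=\widetilde{\star[X]}$ is stable and of finite type, the $D[X]$-analogue of \cite[Lemma 2.11]{EFP} computes $\star[X]$-$\dim(D[X])$ as the supremum of the heights $\h(Q)$ of the quasi-$\star[X]$-prime ideals $Q$ of $D[X]$; here I would use that the set $\QSpec^{\star[X]}(D[X])=\Theta_1^{\star}\setminus\{0\}$ is closed under passing to smaller primes (if $Q'\subseteq Q$ then $Q'\cap D\subseteq Q\cap D$, and the defining condition on the contraction is inherited), so that for a quasi-$\star[X]$-prime $Q$ its $\star[X]$-height coincides with the ordinary height $\h(Q)$. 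The same remark applies in $D$: below a quasi-$\widetilde{\star}$-prime every prime is again quasi-$\widetilde{\star}$, so $\widetilde{\star}$-$\dim(D)=n$ means precisely that every quasi-$\widetilde{\star}$-prime $P$ has $\h(P)\leq n$. Thus it suffices to show $\h(Q)\leq n+1$ for every quasi-$\star[X]$-prime $Q$.

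Next I would classify $Q$ according to its contraction $P:=Q\cap D$. Because the fibre of $\Spec(D[X])\to\Spec(D)$ over $P$ is $\Spec(\kappa(P)[X])$, which is one-dimensional, and by the special chain theorem for $D[X]$ one has $\h(Q)=\h(P[X])+\varepsilon$ with $\varepsilon\in\{0,1\}$, where $\varepsilon=0$ exactly when $Q=P[X]$ and $\varepsilon=1$ when $Q$ is an upper to $P$. There are then three cases: (i) $P=(0)$, where $Q$ is an upper to zero and $\h(Q)=1\leq n+1$; (ii) $Q=P[X]$ with $P\neq(0)$; and (iii) $Q$ an upper to a nonzero $P$. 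In cases (ii) and (iii), a nonzero contraction of a quasi-$\star[X]$-prime satisfies $(Q\cap D)^{\star_f}\subsetneq D^{\star}$, which by \cite[Lemma 4.20]{FH} and the description of $\widetilde{\star}$ is equivalent to $P$ being a nonzero quasi-$\widetilde{\star}$-prime, so $\h(P)\leq n$. The whole matter is therefore reduced to the height formula $\h(P[X])=\h(P)$ for every quasi-$\widetilde{\star}$-prime $P$: granting it, case (ii) gives $\h(Q)=\h(P)\leq n$ and case (iii) gives $\h(Q)=\h(P)+1\leq n+1$, and taking the supremum over all three types yields $\star[X]$-$\dim(D[X])\leq n+1$, hence equality.

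It remains to establish $\h(P[X])=\h(P)$ for quasi-$\widetilde{\star}$-primes $P$. The inequality $\h(P[X])\geq\h(P)$ is automatic, since lifting any chain below $P$ to its extensions preserves strict inclusions; the hypothesis is needed only for $\h(P[X])\leq\h(P)$. I would prove this by induction on $\h(P)$. The base case $\h(P)=1$ is exactly the $\widetilde{\star}$-S-property, which follows from the $\widetilde{\star}$-strong S-property applied to the adjacent pair $(0)\subset P$ (together with $\widetilde{\star}[X]=\star[X]$): this gives that $(0)\subset P[X]$ is a pair of adjacent quasi-$\star[X]$-primes, and, since these are downward closed, that no prime lies strictly between, i.e.\ $\h(P[X])=1$. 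For the inductive step one chooses a quasi-$\widetilde{\star}$-prime $\bar P\subset P$ adjacent to $P$ (recall $\h(\bar P)\leq\h(P)-1$); the $\widetilde{\star}$-strong S-property makes $\bar P[X]\subset P[X]$ adjacent in $D[X]$, and one combines this adjacency with the one-dimensionality of the fibres to feed a maximal chain below $P[X]$ into the induction hypothesis.

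The main obstacle is precisely this last inductive step in the direction $\h(P[X])\leq\h(P)$. The difficulty is that the prime $Q_{s-1}$ lying just below $P[X]$ in a maximal chain need not be contained in the chosen $\bar P[X]$: it may be an upper to some prime strictly smaller than $\bar P$ and hence incomparable with $\bar P[X]$, so the induction hypothesis does not apply to it verbatim. Overcoming this requires careful bookkeeping of where the contractions of the chain ``double'' against the adjacency forced by the $\widetilde{\star}$-strong S-property, using the special chain structure to reorganize the chain so that it passes through extended ideals, and using that an upper to zero is never contained in $P[X]$ for $P\neq(0)$ (its primitive generator forbids this). Once this interaction is resolved, the numerical bound $n+1$ drops out and the proposition follows.
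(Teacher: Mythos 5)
Your overall route coincides with the paper's: the lower bound is quoted from \cite[Theorem 3.1]{S}, the upper bound is obtained by contracting a quasi-$\star[X]$-prime $Q$ to $P:=Q\cap D$, observing that a nonzero $P$ is a quasi-$\widetilde{\star}$-prime (hence $\h(P)\leq n$), and reducing everything to the height formula $\h(P[X])=\h(P)$, after which $\h(Q)\leq \h(P[X])+1\leq n+1$. The paper disposes of the height formula in one line, ``by an argument the same as \cite[Theorem 39]{K}.'' You, quite honestly, stop exactly there: your last paragraph concedes that the inductive step of $\h(P[X])\leq\h(P)$ is ``the main obstacle'' and that it ``requires careful bookkeeping'' which you do not carry out. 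That is the one genuinely substantive point of the whole proposition, so as written the proposal has a real gap.

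The missing idea is the following combination, which is precisely Kaplansky's Theorem 39 argument. Work by induction on $h=\h(P)$ and let $Q'\subset P[X]$ be adjacent, with $P':=Q'\cap D\subsetneq P$ (proper because $P[X]$ is the unique minimal prime of $D[X]$ over $P$); note $P'$ is again a quasi-$\widetilde{\star}$-prime, so the induction hypothesis applies to it. By the special chain theorem you already invoked, $\h(Q')=\h(P'[X])+\varepsilon$ with $\varepsilon=\h(Q'/P'[X])\in\{0,1\}$, and by induction $\h(P'[X])=\h(P')\leq h-1$. If $\varepsilon=0$ you are done. If $\varepsilon=1$, i.e.\ $Q'$ is an upper to $P'$, you must rule out $\h(P')=h-1$: in that case $P'\subset P$ would be adjacent (no prime can sit strictly between two primes whose heights differ by one), so the $\widetilde{\star}$-strong S-property would make $P'[X]\subset P[X]$ adjacent, contradicting $P'[X]\subsetneq Q'\subsetneq P[X]$. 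Hence $\h(P')\leq h-2$ and $\h(Q')\leq h-1$ in every case, giving $\h(P[X])\leq h$. Note that the issue you worried about --- a prime just below $P[X]$ being an upper to something far below $P$ --- is handled by applying the special chain theorem to that prime itself rather than trying to push the whole chain through $\bar P[X]$. With this paragraph inserted, your argument is complete and agrees with the paper's.
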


\begin{proof} By \cite[Theorem 3.1]{S}, we only have to show that $\star[X]$-$\dim(D[X])\leq
n+1$. Let $Q\in\QSpec^{\star[X]}(D[X])$ and set $P:=Q\cap D$.
There are two cases to consider. If $P=0$, then by \cite[Theorem
37]{K}, $\hight(Q)\leq1$. If $P\neq0$, we show that
$P\in\QSpec^{\widetilde{\star}}(D)$. Let
$M\in\QMax^{\star[X]}(D[X])$ containing $Q$ \cite[Lemma 2.3
(1)]{FL}. Since $0\neq P\subseteq M\cap D$, then $M\cap
D\in\QSpec^{\star_f}(D)$ by \cite[Remark 2.3]{S}. Hence
$P\in\QSpec^{\widetilde{\star}}(D)$ by \cite[Lemma 4.1, Remark
4.5]{FH}. Consequently $\hight(P)\leq n$ by the hypothesis. By an
argument the same as \cite[Theorem 39]{K}, we obtain that
$\hight(P)=\hight(P[X])$. Since $P[X]\subseteq Q$, then we have
$\hight(Q)\leq n+1$. Therefore $\star[X]$-$\dim(D[X])\leq n+1$ as
desired.
\end{proof}

\begin{cor} Each $\widetilde{\star}$-stably strong S-domain of finite $\widetilde{\star}$-dimension is an $\widetilde{\star}$-Jaffard domain.
\end{cor}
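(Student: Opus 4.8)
The plan is to verify the defining condition of an $\widetilde{\star}$-Jaffard domain directly: since $\widetilde{\star}$-$\dim(D)=n$ is finite by hypothesis, it remains only to show that $\star[r]$-$\dim(D[r])=n+r$ for every $r\geq 1$. I would prove this by induction on $r$, using the preceding Proposition as the sole engine and the stably-strong-S hypothesis to feed that Proposition at each stage.

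For the base case $r=1$, I would apply the Proposition to $D$ itself, equipped with $\widetilde{\star}$. Being $\widetilde{\star}$-stably strong S, the domain $D$ is in particular a $\widetilde{\star}$-strong S-domain (the $\star$-analogue of the classical fact that the strong S-property descends from $D[X]$ to $D$), and $\widetilde{\star}$-$\dim(D)=n$ is finite; since $\widetilde{\star}[X]=\star[X]$, the Proposition delivers $\star[X]$-$\dim(D[X])=n+1$.

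For the inductive step I would assume $\star[r-1]$-$\dim(D[r-1])=n+(r-1)$ for some $r\geq 2$ and apply the Proposition to the domain $D[r-1]$ carrying the semistar operation $\sigma:=\star[r-1]$. Here lies the one point that needs genuine care rather than mere bookkeeping: the Proposition demands that the ground domain be a $\widetilde{\sigma}$-strong S-domain, whereas the hypothesis only tells us that $D[r-1]$ is a $\sigma$-strong S-domain. The two conditions coincide once one invokes the fact, recorded in the discussion preceding the Definition, that every operation of the shape $\bullet[X]$ is stable and of finite type; applied recursively this gives $\widetilde{\sigma}=\widetilde{\star[r-1]}=\star[r-1]=\sigma$. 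With this identification in hand the strong-S hypothesis is met, the inductive hypothesis makes $\widetilde{\sigma}$-$\dim(D[r-1])=n+(r-1)$ finite, and the Proposition yields $\sigma[X_r]$-$\dim((D[r-1])[X_r])=(n+r-1)+1$.

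Finally I would unwind the recursive definitions, $\sigma[X_r]=(\star[r-1])[X_r]=\star[r]$ and $(D[r-1])[X_r]=D[r]$, to read the conclusion as $\star[r]$-$\dim(D[r])=n+r$, closing the induction and showing that $D$ is $\widetilde{\star}$-Jaffard. I expect the crux to be exactly the identification $\widetilde{\star[r-1]}=\star[r-1]$ above: without the stability and finite type of $\bullet[X]$, the $\sigma$-strong-S information carried by the hypothesis could not be fed to the Proposition, whose hypothesis is phrased through $\widetilde{\sigma}$, and the induction would fail to propagate. A secondary point worth confirming is the base-case descent, namely that being $\widetilde{\star}$-stably strong S forces $D$ itself to be $\widetilde{\star}$-strong S.
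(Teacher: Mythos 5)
Your proof is correct and is exactly the argument the paper intends: the corollary is stated without proof as the evident induction on the number of variables via the preceding Proposition, with the identification $\widetilde{\star[r]}=\star[r]$ (stability and finite type of the operations $\bullet[X]$) making each inductive step legitimate, just as you say. The only fragile point is the one you yourself flag --- that $D$ itself is a $\widetilde{\star}$-strong S-domain, which the paper's definition of $\widetilde{\star}$-stably strong S (requiring only $D[n]$ for $n\geq 1$) does not literally contain --- and this is better secured through Propositions \ref{SSSD} and \ref{SSD} (each $D_P$ with $P\in\QSpec^{\widetilde{\star}}(D)$ is a classical stably strong S-domain, hence a strong S-domain, hence $D$ is $\widetilde{\star}$-strong S) than by appealing to a descent of the strong S-property from $D[X]$ to $D$, which is not a fact you should rely on without proof.
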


\begin{prop}\label{SSD} Let $D$ be an integral domain. The following then are equivalent:
\begin{itemize}
\item[(1)] $D$ is an $\widetilde{\star}$-S-domain (resp. $\widetilde{\star}$-strong
S-domain).

\item[(2)] $D_P$ is an S-domain (resp. strong S-domain) for all $P\in\QSpec^{\widetilde{\star}}(D)$.

\item[(3)] $D_M$ is an S-domain (resp. strong S-domain) for all $M\in\QMax^{\widetilde{\star}}(D)$.
\end{itemize}
\end{prop}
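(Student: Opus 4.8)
The plan is to prove the cycle $(1)\Rightarrow(2)\Rightarrow(3)\Rightarrow(1)$, treating the S-domain and strong S-domain versions in parallel. The whole argument rests on two preliminary observations about the spectral, stable, finite-type operation $\widetilde{\star}=\star_{\QMax^{\star_f}(D)}$. First, a nonzero prime $P$ of $D$ is a quasi-$\widetilde{\star}$-prime if and only if $P\subseteq M$ for some $M\in\QMax^{\widetilde{\star}}(D)$: writing $P^{\widetilde{\star}}\cap D=\bigcap_{M}(PD_M\cap D)$ and using that $PD_M\cap D=P$ precisely when $P\subseteq M$ (and $=D$ otherwise) gives the claim. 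Consequently every prime lying below a quasi-$\widetilde{\star}$-prime is again one, so quasi-$\widetilde{\star}$-height and quasi-$\widetilde{\star}$-adjacency in $D$ coincide with ordinary height and adjacency. Second, if $P\subseteq M\in\QMax^{\widetilde{\star}}(D)$ then $M^{\star_f}\subsetneq D^{\star}$ (else $M=M^{\star_f}\cap D=D$), so for any prime $Q$ of $D[X]$ with $Q\cap D\subseteq M$ one has either $Q\cap D=0$ or $(Q\cap D)^{\star_f}\subseteq M^{\star_f}\subsetneq D^{\star}$; in either case $Q\in\Theta_1^{\star}$, i.e. $Q$ is a quasi-$\star[X]$-prime. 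In particular $P[X]$ itself is a quasi-$\star[X]$-prime.

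The key localization device is this: viewing $D_M[X]$ as the localization of $D[X]$ at $D\setminus M$, the surviving primes are exactly those $Q$ with $Q\cap D\subseteq M$. Since any prime below $P[X]$ (for $P\subseteq M$) contracts into $P\subseteq M$, \emph{all} primes below $P[X]$ survive, whence $\h(PD_M[X])=\h(P[X])$ and $\h(PD_M)=\h(P)$, with the induced order-isomorphism preserving adjacency. The same statements hold with $M$ replaced by any $P\in\QSpec^{\widetilde{\star}}(D)$, producing $D_P[X]$.

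With these in hand, $(1)\Rightarrow(2)$ goes as follows. For $P\in\QSpec^{\widetilde{\star}}(D)$, a height-one prime of $D_P$ is $P'D_P$ with $P'\subseteq P$ a height-one quasi-$\widetilde{\star}$-prime; by $(1)$, $\h(P'[X])=1$, so $\h(P'D_P[X])=\h(P'[X])=1$ and $D_P$ is an S-domain. In the strong S case, adjacent primes of $D_P$ come from adjacent quasi-$\widetilde{\star}$-primes $P_1\subset P_2\subseteq P$, which $(1)$ sends to adjacent quasi-$\star[X]$-primes $P_1[X]\subset P_2[X]$; any intermediate prime in $D_P[X]$ lifts to a prime $Q$ of $D[X]$ with $Q\cap D\subseteq P$ — hence a quasi-$\star[X]$-prime by the second observation — so adjacency transfers to $D_P[X]$. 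The step $(2)\Rightarrow(3)$ is immediate from $\QMax^{\widetilde{\star}}(D)\subseteq\QSpec^{\widetilde{\star}}(D)$. For $(3)\Rightarrow(1)$, given a height-one quasi-$\widetilde{\star}$-prime $P$ (resp. an adjacent pair $P_1\subset P_2$), pick $M\in\QMax^{\widetilde{\star}}(D)$ above it; the (strong) S-property of $D_M$ gives $\h(PD_M[X])=1$ (resp. adjacency of $P_1D_M[X]\subset P_2D_M[X]$), which pulls back along the localization to $\h(P[X])=1$ (resp. adjacency of $P_1[X]\subset P_2[X]$ among quasi-$\star[X]$-primes), since $P[X]$ and all relevant intermediate primes are quasi-$\star[X]$-primes.

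The main obstacle is exactly the bookkeeping of contractions and extensions in the last two paragraphs: one must be sure that passing between $D[X]$ and $D_M[X]$ (or $D_P[X]$) neither creates nor destroys primes below the relevant $P[X]$, and that the primes one must exclude when testing quasi-$\star[X]$-adjacency are genuinely quasi-$\star[X]$-primes. Both points are dispatched by the two preliminary observations — the explicit description of $\Theta_1^{\star}$ together with the fact that every prime below $P[X]$ contracts into $M$ — so no essentially new difficulty arises beyond careful tracking of these contractions and extensions.
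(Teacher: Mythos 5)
Your proposal is correct and is essentially the paper's approach: the paper's proof simply defers to the method of Malik--Mott's Proposition 2.1 together with the one extra ingredient you also isolate, namely that every quasi-$\widetilde{\star}$-prime lies under a quasi-$\widetilde{\star}$-maximal ideal (equivalently, your first observation that the quasi-$\widetilde{\star}$-primes are exactly the primes contained in some $M\in\QMax^{\widetilde{\star}}(D)$, so that quasi-heights and quasi-adjacency agree with the ordinary ones). Your write-up just makes explicit the localization bookkeeping and the identification $\QSpec^{\star[X]}(D[X])=\Theta_1^{\star}\setminus\{0\}$ that the citation leaves implicit, and both steps are carried out correctly.
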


\begin{proof} For either cases follow the
method of \cite[Proposition 2.1]{MM}, and note that every
quasi-$\widetilde{\star}$-prime ideal of $D$ in contained in a
quasi-$\widetilde{\star}$-maximal ideal by \cite[Lemma 2.3 (1)]{FL}.
\end{proof}

\begin{prop}\label{SSSD} Let $D$ be an integral domain. The following then are equivalent:
\begin{itemize}
\item[(1)] $D$ is an $\widetilde{\star}$-stably strong S-domain.

\item[(2)] $D_P$ is an stably strong S-domain, for all $P\in\QSpec^{\widetilde{\star}}(D)$.

\item[(3)] $D_M$ is an stably strong S-domain, for all $M\in\QMax^{\widetilde{\star}}(D)$.
\end{itemize}
\end{prop}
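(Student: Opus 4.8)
The plan is to reduce everything, through the \emph{definition} of an $\widetilde{\star}$-stably strong S-domain, to repeated use of the single-variable Proposition \ref{SSD}, and then to pass to the polynomial rings $D_P[n]$ over the localizations by means of a localization dictionary.

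First I would dispose of the easy equivalence (2)$\Leftrightarrow$(3). The implication (2)$\Rightarrow$(3) is immediate since $\QMax^{\widetilde{\star}}(D)\subseteq\QSpec^{\widetilde{\star}}(D)$. For (3)$\Rightarrow$(2), recall that $\widetilde{\star}$ is spectral with respect to $\QMax^{\widetilde{\star}}(D)$, so every $P\in\QSpec^{\widetilde{\star}}(D)$ is contained in some $M\in\QMax^{\widetilde{\star}}(D)$ by \cite[Lemma 2.3(1)]{FL}; then $D_P=(D_M)_{PD_M}$ is a localization of $D_M$, and since the class of (classical) stably strong S-domains is stable under localization \cite{MM}, (3) yields (2).

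For the main equivalence I would argue (1)$\Leftrightarrow$(2). By definition, $D$ is an $\widetilde{\star}$-stably strong S-domain if and only if $D[n]$ is a $\star[n]$-strong S-domain for every $n\geq1$. Since $\star[n]$ is stable of finite type, i.e. $\widetilde{\star[n]}=\star[n]$, Proposition \ref{SSD} applied to the domain $D[n]$ with the operation $\star[n]$ shows that $D[n]$ is a $\star[n]$-strong S-domain if and only if $(D[n])_N$ is a strong S-domain for every $N\in\QMax^{\star[n]}(D[n])$. Thus (1) is equivalent to the assertion that $(D[n])_N$ is a strong S-domain for all $n\geq1$ and all $N\in\QMax^{\star[n]}(D[n])$. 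Now set $P:=N\cap D$. If $P=0$ then $(D[n])_N$ is a localization of the polynomial ring $K[X_1,\dots,X_n]$ over a field, hence Noetherian, and so automatically a strong S-domain; if $P\neq0$ then $P\in\QSpec^{\widetilde{\star}}(D)$, and localizing the polynomial ring at $D\setminus P$ gives $D[n]_{D\setminus P}=D_P[n]$, so that $(D[n])_N=(D_P[n])_{N'}$ with $N':=ND_P[n]$. Using that $\QSpec^{\widetilde{\star}}(D)$ is closed under passing to smaller primes and that the stably strong S-property is preserved under localization, I would match the two families of local conditions and conclude that (1) holds exactly when $D_P$ is a stably strong S-domain for every $P\in\QSpec^{\widetilde{\star}}(D)$, i.e. exactly when (2) holds; combined with (2)$\Leftrightarrow$(3) this finishes the proof.

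The main obstacle is precisely this last matching of local conditions. On one side one must show that a quasi-$\star[n]$-maximal ideal $N$ of $D[n]$ contracts to a quasi-$\widetilde{\star}$-prime $P$ of $D$ and that $(D[n])_N$ is the localization $(D_P[n])_{N'}$; on the other side one must realize every localization of the rings $D_P[n]$ needed to test the classical stably strong S-property as such a $(D[n])_N$, or bypass this by invoking stability of the stably strong S-property under localization. The delicate point is the compatibility of the polynomial semistar construction $\star[n]$ with localization at quasi-$\widetilde{\star}$-maximal ideals of $D$, together with the correct handling of the universal quantifier over $n$ on both sides; everything else is a routine transcription of the classical Malik--Mott arguments \cite{MM} into the semistar language.
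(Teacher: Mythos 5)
Your proposal is correct and follows essentially the same route as the paper's own proof: both reduce the stably strong S-property, via Proposition \ref{SSD} applied to $(D[n],\star[n])$, to checking that the local rings $D[n]_Q$ are strong S-domains, and both match these with localizations of $D_P[n]$ through the identification $D[n]_{D\setminus P}=D_P[n]$, splitting on whether the contraction $Q\cap D$ is zero (Noetherian case) or a quasi-$\widetilde{\star}$-prime. The ``matching of local conditions'' you flag as the delicate point is exactly what the paper carries out using \cite[Remark 2.3]{S} (primes of $D[n]$ contracting inside a quasi-$\widetilde{\star}$-prime are quasi-$\star[n]$-primes), so no genuinely different ideas are involved.
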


\begin{proof} $(1)\Rightarrow(2)$. Suppose that $D$ is an $\widetilde{\star}$-stably strong
S-domain, $P\in\QSpec^{\widetilde{\star}}(D)$ and $n\geq1$ is an
integer. It suffices by \cite[Lemma 6.3.1]{FHP}, to show that for
each maximal ideal $\mathcal{M}$ of $D_P[n]$, the local ring
$D_P[n]_{\mathcal{M}}$ is a strong S-domain. To this end, let
$\mathcal{M}$ be an arbitrary maximal ideal of $D_P[n]$. Note that
$D_P[n]=D[n]_{D\backslash P}$. So that there exists a prime ideal
$M$ of $D[n]$ such that $M\cap (D\backslash P)=\emptyset$ and
$\mathcal{M}=MD_P[n]$. Consequently $M\cap D\subseteq P$, and
therefore by \cite[Remark 2.3]{S}, we have
$M\in\QSpec^{\star[n]}(D[n])$. Now since by the hypothesis $D[n]$ is
an $\star[n]$-strong S-domain, we then have
$D_P[n]_{\mathcal{M}}=D[n]_M$ is a strong S-domain domain by
Proposition \ref{SSD}.

$(2)\Rightarrow(1)$. Let $n\geq1$ be an integer,
$Q\in\QSpec^{\star[n]}(D[n])$ and set $P:=Q\cap D$. We plan to show
that $D[n]_Q$ is an strong S-domain domain. If $P=0$, then
$D[n]_Q=K[n]_{QK[n]}$, which is an strong S-domain domain since it
is Noetherian and \cite[Theorem 149]{K}. If $P\neq0$, then
$P\in\QSpec^{\widetilde{\star}}(D)$. Therefore
$D[n]_Q=D_P[n]_{QD_P[n]}$, and hence is an strong S-domain by
hypothesis. So that $D[n]$ is an $\star[n]$-strong S-domain by
Proposition \ref{SSD}. Thus $D$ is an $\widetilde{\star}$-stably
strong S-domain by the definition.

$(2)\Leftrightarrow(3)$ is true, using \cite[Lemma 6.3.1]{FHP}.
\end{proof}

Recall from \cite{CF} that $D$ is said to be a
\emph{$\star$-quasi-Pr\"{u}fer domain}, in case, if $Q$ is a prime
ideal in $D[X]$, and $Q\subseteq P[X]$, for some
$P\in\QSpec^{\star}(D)$, then $Q=(Q\cap D)[X]$. This notion is the
semistar analogue of the classical notion of the quasi-Pr\"{u}fer
domains \cite[Section 6.5]{FHP} (that is among other equivalent
conditions, the domain $D$ is said to be a \emph{quasi-Pr\"{u}fer
domain} if it has Pr\"{u}ferian integral closure). By
\cite[Corollary 2.4]{CF}, $D$ is a $\star_f$-quasi-Pr\"{u}fer
domain if and only if $D$ is a
$\widetilde{\star}$-quasi-Pr\"{u}fer domain.

\begin{cor} If $D$ is a $\widetilde{\star}$-Noetherian (resp. a $\widetilde{\star}$-quasi-Pr\"{u}fer) domain,
then $D$ is an $\widetilde{\star}$-stably strong S-domain.
\end{cor}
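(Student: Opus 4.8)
The plan is to reduce both cases to the previously established Proposition~\ref{SSSD}, so that it suffices to verify a suitable local condition. Specifically, by Proposition~\ref{SSSD} the domain $D$ is an $\widetilde{\star}$-stably strong S-domain if and only if $D_P$ is a stably strong S-domain for every $P\in\QSpec^{\widetilde{\star}}(D)$. Thus the whole task is to show that localizing a $\widetilde{\star}$-Noetherian (resp. $\widetilde{\star}$-quasi-Pr\"{u}fer) domain at a quasi-$\widetilde{\star}$-prime produces an honest Noetherian (resp. quasi-Pr\"{u}fer) local domain, and then to invoke the known classical fact that Noetherian and quasi-Pr\"{u}fer domains are stably strong S-domains.

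For the Noetherian case, I would first recall that the quasi-$\widetilde{\star}$-primes of $D$ are exactly the primes contracting from quasi-$\widetilde{\star}$-maximal ideals, and that $\widetilde{\star}$ is spectral (induced by $\QMax^{\star_f}(D)$). The key localization step is that for $P\in\QSpec^{\widetilde{\star}}(D)$ the ring $D_P$ inherits the ascending chain condition on ideals from the ascending chain condition on quasi-$\widetilde{\star}$-ideals of $D$: every ideal of $D_P$ is the extension of a quasi-$\widetilde{\star}$-ideal contained in $P$ (here one uses that $P\subseteq$ some $M\in\QMax^{\widetilde{\star}}(D)$ so that $D_P$ is a localization of $D_M$ and that $\widetilde{\star}$ is stable and of finite type, which makes the correspondence between ideals of $D_P$ and quasi-$\widetilde{\star}$-ideals well behaved). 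Hence $D_P$ is Noetherian, and being Noetherian it is a stably strong S-domain by the classical theory; Proposition~\ref{SSSD} then yields the conclusion.

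For the $\widetilde{\star}$-quasi-Pr\"{u}fer case I would argue analogously, passing through the localization and using the characterization of $\star$-quasi-Pr\"{u}fer domains recalled just before the corollary. By \cite[Corollary~2.4]{CF} we may work with $\widetilde{\star}$, and the goal is to show $D_P$ is an ordinary quasi-Pr\"{u}fer domain for each $P\in\QSpec^{\widetilde{\star}}(D)$. The defining condition—that every prime $Q$ of $D[X]$ lying under $P[X]$ with $P$ quasi-$\widetilde{\star}$-prime satisfies $Q=(Q\cap D)[X]$—localizes: taking $Q$ a prime of $D_P[X]$ and tracking it back to a prime of $D[X]$ contained in $P[X]$, the $\widetilde{\star}$-quasi-Pr\"{u}fer hypothesis forces the extended/contracted statement in $D_P[X]$, so $D_P$ is quasi-Pr\"{u}fer. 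Since quasi-Pr\"{u}fer domains are stably strong S-domains classically, Proposition~\ref{SSSD} again gives that $D$ is an $\widetilde{\star}$-stably strong S-domain.

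The main obstacle I anticipate is the localization step in each case: verifying cleanly that the global semistar hypothesis (ascending chain condition on quasi-$\widetilde{\star}$-ideals, or the quasi-Pr\"{u}fer contraction property relative to quasi-$\widetilde{\star}$-primes) descends to the ordinary ring-theoretic property of the localization $D_P$. This hinges on the stability and finite-type nature of $\widetilde{\star}$, which ensures that quasi-$\widetilde{\star}$-primes behave well under localization and that $D_P=D_{M}\!\restriction$ for an appropriate $M\in\QMax^{\widetilde{\star}}(D)$; once that compatibility is in hand, the rest is a direct appeal to the classical results together with Proposition~\ref{SSSD}.
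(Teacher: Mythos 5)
Your proposal is correct and follows essentially the same route as the paper: reduce via Proposition~\ref{SSSD} to showing that $D_P$ is Noetherian (resp.\ quasi-Pr\"{u}fer) for each $P\in\QSpec^{\widetilde{\star}}(D)$, and then invoke the classical facts that such domains are stably strong S-domains. The only difference is that you sketch the localization steps directly, whereas the paper simply cites them as known results, namely \cite[Proposition 3.8]{EFP} for the Noetherian case and \cite[Lemma 2.1]{CF} for the quasi-Pr\"{u}fer case.
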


\begin{proof} Let $P\in\QSpec^{\widetilde{\star}}(D)$. Since $D_P$ is a
Noetherian domain by \cite[Proposition 3.8]{EFP} (resp. a
quasi-Pr\"{u}fer domain by \cite[Lemma 2.1]{CF}), we obtain that it
is an stably strong S-domain by \cite[Theorem 149]{K} (resp. by
\cite[Corollary 6.7.6]{FHP}). Therefore $D$ is an
$\widetilde{\star}$-stably strong S-domain by Proposition
\ref{SSSD}.
\end{proof}

Therefore we have the following implications for finite
$\widetilde{\star}$-dimensional domains:
$$ \textsf{$\widetilde{\star}$-Noetherian}\text{ or }
\textsf{$\widetilde{\star}$-quasi-Pr\"{u}fer}\Rightarrow
\textsf{$\widetilde{\star}$-stably strong S-domain}\Rightarrow
\textsf{$\widetilde{\star}$-Jaffard}.$$

\begin{rem} Call a domain $D$, an \emph{$\widetilde{\star}$-locally Jaffard domain} if
$D_P$ is a Jaffard domain for each
$P\in\QSpec^{\widetilde{\star}}(D)$. Therefore every
$\widetilde{\star}$-stably strong S-domain is a
$\widetilde{\star}$-locally Jaffard domain. It is not hard to prove
that every $\widetilde{\star}$-locally Jaffard domain is an
$\widetilde{\star}$-Jaffard domain (see proof of \cite[Theorem
3.2]{S}).
\end{rem}

\section{The $\star$-catenarian domains}

In this section we introduce and study a subclass of
$\widetilde{\star}$-stably strong S-domains, namely
$\widetilde{\star}$-universally catenarian domains.

\begin{defn} The domain $D$ is called \emph{$\star$-catenary}, if for each pair
$P\subset Q$ of quasi-$\star$-prime ideals  of $D$, any two
saturated chain of quasi-$\star$-prime ideals between $P$ and $Q$
have the same finite length. If for each $n\geq1$, the polynomial
ring $D[n]$ is $\star[n]$-catenary, then $D$ is said to be
\emph{$\star$-universally catenarian}.
\end{defn}

Note that the notion of $d$-catenary (resp. $d$-universally
catenarian) coincides with the ``classical'' notion of catenary
(resp. universally catenarian). The proof of the the following
proposition is straightforward, so we omit it.

\begin{prop}\label{caloc} Let $D$ be an integral domain. The following then are equivalent:
\begin{itemize}
\item[(1)] $D$ is $\widetilde{\star}$-catenary.

\item[(2)] $D_P$ is catenary for all $P\in\QSpec^{\widetilde{\star}}(D)$.

\item[(3)] $D_M$ is catenary for all $M\in\QMax^{\widetilde{\star}}(D)$.
\end{itemize}
\end{prop}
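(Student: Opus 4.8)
The plan is to establish the equivalences by reducing everything to localizations at quasi-$\widetilde{\star}$-prime ideals, exactly as was done for the strong S-domain properties in Propositions \ref{SSD} and \ref{SSSD}. The key structural fact I would exploit is that catenarity is a local property in the appropriate sense: saturated chains of primes between $P$ and $Q$ in $D$ localize to saturated chains in $D_M$ for any quasi-$\widetilde{\star}$-maximal $M$ containing $Q$, and conversely chains in the localizations lift back. The first step is therefore to unwind the definition of $\widetilde{\star}$-catenary and recall that, by the spectral description of $\widetilde{\star}$, every quasi-$\widetilde{\star}$-prime ideal is contained in a quasi-$\widetilde{\star}$-maximal ideal (via \cite[Lemma 2.3 (1)]{FL}), and that for $P\in\QSpec^{\widetilde{\star}}(D)$ the ideals of $D_P$ correspond to primes of $D$ contained in $P$, all of which are automatically quasi-$\widetilde{\star}$-prime.

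For the implication $(1)\Rightarrow(2)$ I would take $P\in\QSpec^{\widetilde{\star}}(D)$ and a pair of primes $\mathfrak{p}\subset\mathfrak{q}$ in $D_P$; these contract to quasi-$\widetilde{\star}$-primes $P'\subset Q'\subseteq P$ of $D$. Two saturated chains between $\mathfrak{p}$ and $\mathfrak{q}$ in $D_P$ correspond bijectively to saturated chains of quasi-$\widetilde{\star}$-primes between $P'$ and $Q'$ in $D$, and each intermediate prime, lying below $P$, is again a quasi-$\widetilde{\star}$-prime. Hence the $\widetilde{\star}$-catenarity of $D$ forces these chains to have equal finite length, giving catenarity of $D_P$. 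The implication $(2)\Rightarrow(3)$ is immediate since quasi-$\widetilde{\star}$-maximal ideals are in particular quasi-$\widetilde{\star}$-prime.

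For $(3)\Rightarrow(1)$, given a pair $P\subset Q$ of quasi-$\widetilde{\star}$-primes and two saturated chains of quasi-$\widetilde{\star}$-primes between them, I would choose a quasi-$\widetilde{\star}$-maximal ideal $M\supseteq Q$. Because every intermediate prime sits below $M$, the two chains survive intact in $D_M$ as saturated chains of primes between $PD_M$ and $QD_M$; catenarity of $D_M$ then equalizes their lengths, which yields $\widetilde{\star}$-catenarity of $D$. The one point requiring care, and the place I expect the main obstacle, is verifying that saturatedness of a chain of quasi-$\widetilde{\star}$-primes in $D$ is genuinely equivalent to saturatedness of the corresponding chain of primes in the localization: one must check there is no quasi-$\widetilde{\star}$-prime strictly between two consecutive members of the chain in $D$ precisely when there is no prime strictly between the images in $D_M$. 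This hinges on the fact that any prime of $D$ squeezed between two quasi-$\widetilde{\star}$-primes is itself quasi-$\widetilde{\star}$-prime (being contained in $M$), which is exactly what makes the correspondence faithful. Since this verification is routine given the spectral nature of $\widetilde{\star}$, the proof is indeed straightforward, and I would simply note these remarks and omit the mechanical details, as the paper does.
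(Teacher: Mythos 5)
The paper gives no proof of this proposition (it is declared straightforward and omitted), so there is no written argument to compare against; your localization scheme is clearly the intended one, and the directions $(2)\Rightarrow(3)$ and $(3)\Rightarrow(1)$ are correct as you describe them. In particular you correctly isolate the key fact that a nonzero prime contained in a quasi-$\widetilde{\star}$-maximal (or quasi-$\widetilde{\star}$-prime) ideal is again a quasi-$\widetilde{\star}$-prime, which is what makes ``saturated among quasi-$\widetilde{\star}$-primes'' agree with ``saturated among all primes'' and lets chains pass faithfully through the localization.

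There is, however, one genuine gap, and it is not the one you flagged. In $(1)\Rightarrow(2)$ you assert that a pair of primes $\mathfrak{p}\subset\mathfrak{q}$ of $D_P$ contracts to a pair of quasi-$\widetilde{\star}$-primes of $D$. This fails when $\mathfrak{p}=(0)$: by the paper's definition a quasi-$\star$-ideal is a \emph{nonzero} ideal, so $(0)\notin\QSpec^{\widetilde{\star}}(D)$, and the hypothesis that $D$ is $\widetilde{\star}$-catenary says nothing about saturated chains emanating from $(0)$. Classical catenarity of the domain $D_P$, which is what conclusion $(2)$ asserts, does quantify over the pair $((0),\mathfrak{q})$, and this case is not vacuous: a three-dimensional non-catenary Noetherian local domain in the style of Nagata satisfies the catenary condition for every pair of \emph{nonzero} primes (between nonzero primes every saturated chain has length $1$ or $2$, and for a fixed pair only one of these can occur), yet has saturated chains of lengths $2$ and $3$ from $(0)$ to the maximal ideal. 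So your argument as written only shows that $D_P$ is catenary away from the generic point. Closing the gap requires either adopting the convention that catenarity of a domain is tested only on pairs of nonzero primes, or allowing $(0)$ as the bottom of the chains in the definition of $\widetilde{\star}$-catenary; the same issue is latent in the paper's unstated proof, but your write-up makes the problematic step explicit.
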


\begin{lem}\label{loc} Let $D$ be an integral domain and $n\geq1$ be an integer. Then $D[n]$ is
$\star[n]$-catenary, if and only if, $D_P[n]$ is catenary for all
$P\in\QSpec^{\widetilde{\star}}(D)$.
\end{lem}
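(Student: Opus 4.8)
The plan is to reduce the statement \lem\ref{loc} to the already-established local characterization of $\star[n]$-catenary rings provided by Proposition \ref{caloc}, applied this time to the polynomial ring $D[n]$ with its semistar operation $\star[n]$. Since $\star[n]=\widetilde{\star[n]}$ is itself a stable semistar operation of finite type on $D[n]$ (as recalled in the opening of Section 2), Proposition \ref{caloc} tells us that $D[n]$ is $\star[n]$-catenary if and only if $D[n]_{Q}$ is catenary for every $Q\in\QMax^{\star[n]}(D[n])$. So the whole task is to match up the quasi-$\star[n]$-maximal (or quasi-$\star[n]$-prime) ideals $Q$ of $D[n]$ with the rings $D_{P}[n]$ for $P\in\QSpec^{\widetilde{\star}}(D)$, and to show the two catenarity conditions coincide.

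First I would set up the correspondence between contractions. Given $Q\in\QSpec^{\star[n]}(D[n])$, put $P:=Q\cap D$. As in the proof of Proposition \ref{SSSD}, either $P=0$ or $P\in\QSpec^{\widetilde{\star}}(D)$ (invoking \cite[Remark 2.3]{S} to get $Q\cap D\in\QSpec^{\star_f}(D)$ and then \cite[Lemma 4.1, Remark 4.5]{FH}). Conversely, every $P\in\QSpec^{\widetilde{\star}}(D)$ arises as $Q\cap D$ for suitable $Q$, for instance from $P[n]$. The key computation, again lifted from the proof of Proposition \ref{SSSD}, is the localization identity
\[
D[n]_{Q}=D_{P}[n]_{QD_{P}[n]},
\]
valid because $D_{P}[n]=D[n]_{D\backslash P}$ and $M\cap(D\backslash P)=\emptyset$ forces $M\cap D\subseteq P$ for the relevant primes $M$. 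This says that localizing $D[n]$ at a quasi-$\star[n]$-prime is the same as localizing $D_{P}[n]$ at the corresponding prime.

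Then I would argue in both directions. For the forward implication, suppose $D[n]$ is $\star[n]$-catenary and fix $P\in\QSpec^{\widetilde{\star}}(D)$; to show $D_{P}[n]$ is catenary it suffices, by the ordinary local characterization of catenarity \cite[Lemma 6.3.1]{FHP} or the analogue already used, to check that $(D_{P}[n])_{\mathcal{M}}$ is catenary for each maximal ideal $\mathcal{M}$, and each such $\mathcal{M}$ corresponds to a prime $M$ of $D[n]$ with $M\cap D\subseteq P$, hence $M\in\QSpec^{\star[n]}(D[n])$, so $(D_{P}[n])_{\mathcal{M}}=D[n]_{M}$ is catenary by Proposition \ref{caloc}. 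For the converse, assume $D_{P}[n]$ is catenary for all $P\in\QSpec^{\widetilde{\star}}(D)$; given $Q\in\QMax^{\star[n]}(D[n])$ with $P:=Q\cap D$, if $P=0$ then $D[n]_{Q}$ is a localization of $K[n]$, which is Noetherian and hence catenary, while if $P\neq0$ then $P\in\QSpec^{\widetilde{\star}}(D)$ and the localization identity gives $D[n]_{Q}=D_{P}[n]_{QD_{P}[n]}$, a localization of a catenary domain and therefore catenary. Applying Proposition \ref{caloc} to $D[n]$ then yields that $D[n]$ is $\star[n]$-catenary.

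The main obstacle I anticipate is not conceptual but bookkeeping: one must be careful that the passage between maximal ideals of $D_{P}[n]$ and the quasi-$\star[n]$-prime (or quasi-$\star[n]$-maximal) ideals of $D[n]$ is bijective and compatible with the localization identity, and in particular that catenarity is inherited by and descends from localizations correctly (catenarity of a domain is equivalent to catenarity of all its localizations at maximal ideals, which is what makes the local-to-global step legitimate). The $P=0$ case must also be handled separately as above so that the correspondence is exhaustive. Once these compatibilities are in place, the equivalence follows immediately from Proposition \ref{caloc}.
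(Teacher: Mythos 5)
Your proposal follows essentially the same route as the paper: both directions reduce to Proposition \ref{caloc} applied to $D[n]$ with the stable finite-type operation $\star[n]$, use the identification $D_P[n]=D[n]_{D\backslash P}$ together with \cite[Remark 2.3]{S} to see that the relevant primes of $D[n]$ are quasi-$\star[n]$-primes, and treat the $P=0$ case separately. The only flaw is your justification in that last case: you assert that $K[n]$ is catenary ``because it is Noetherian,'' but Noetherian does not imply catenary (Nagata's examples); the correct reason, and the one the paper gives, is that $K[n]$ is Cohen--Macaulay (indeed regular), hence catenary by \cite[Theorem 2.1.12]{BH}. With that citation repaired, the argument is complete and coincides with the paper's proof.
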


\begin{proof} $(\Rightarrow)$ It suffices by \cite[Lemma 6.3.2]{FHP},
to show that for each maximal ideal $\mathcal{M}$ of $D_P[n]$, the
local ring $D_P[n]_{\mathcal{M}}$ is catenary. To this end, let
$\mathcal{M}$ be an arbitrary maximal ideal of $D_P[n]$. Note that
$D_P[n]=D[n]_{D\backslash P}$. So that there exists a prime ideal
$M$ of $D[n]$ such that $M\cap (D\backslash P)=\emptyset$ and
$\mathcal{M}=MD_P[n]$. Consequently $M\cap D\subseteq P$, and
therefore by \cite[Remark 2.3]{S}, we have
$M\in\QSpec^{\star[n]}(D[n])$. Hence $D_P[n]_{\mathcal{M}}=D[n]_M$
is a catenary domain.

$(\Leftarrow)$ Let $Q\in\Max^{\star[n]}(D[n])$ and set $P:=Q\cap D$.
We plan to show that $D[n]_Q$ is a catenarian domain. There are two
cases to consider. If $P=0$, then $D[n]_Q=K[n]_{QK[n]}$, which is a
catenarian domain since it is a Cohen-Macaulay ring and
\cite[Theorem 2.1.12]{BH}. If $P\neq0$, then
$P\in\QSpec^{\widetilde{\star}}(D)$. Therefore
$D[n]_Q=D_P[n]_{QD_P[n]}$, which is catenary by the hypothesis.
Whence $D[n]$ is $\star[n]$-catenary by Proposition \ref{caloc}.
\end{proof}

It is convenient to say that, a domain $D$ is
\emph{$\widetilde{\star}$-locally finite dimensional} (for short,
$\widetilde{\star}$-LFD) if $\hight(P)<\infty$ for every
$P\in\QSpec^{\widetilde{\star}}(D)$. The special case $\star=d_D$ of
the following theorem is contained in \cite[Theorem 12]{BF}.

\begin{thm} If $D$ is a P$\star$MD which is $\widetilde{\star}$-LFD, then $D$ is $\widetilde{\star}$-universally catenarian.
\end{thm}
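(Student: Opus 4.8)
The plan is to reduce the claim to a purely local statement about the fibers $D_P[n]$ and then invoke the classical result for Pr\"ufer domains. By Lemma \ref{loc}, proving that $D[n]$ is $\star[n]$-catenary for every $n\geq1$ amounts to showing that $D_P[n]$ is catenary for each $P\in\QSpec^{\widetilde{\star}}(D)$. So the first step is to translate the two hypotheses on $D$ into statements about the localizations $D_P$. For the P$\star$MD hypothesis, I would use the known local characterization of P$\star$MDs: a domain $D$ is a P$\star$MD if and only if $D_P$ is a valuation domain for every $P\in\QSpec^{\widetilde{\star}}(D)$ (equivalently for every quasi-$\widetilde{\star}$-maximal ideal). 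For the $\widetilde{\star}$-LFD hypothesis, the definition directly gives that each such $D_P$ has finite Krull dimension, since $\h(P)<\infty$ and $\dim(D_P)=\h(P)$.

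Combining these two reductions, each $D_P$ (for $P\in\QSpec^{\widetilde{\star}}(D)$) is a finite-dimensional valuation domain, hence in particular a finite-dimensional Pr\"ufer domain. The second step is then to apply the classical theorem that a finite-dimensional Pr\"ufer domain is universally catenarian; this is exactly the $\star=d_D$ case cited from \cite[Theorem 12]{BF}. Concretely, since $D_P$ is universally catenarian, the polynomial extension $D_P[n]=D_P[X_1,\dots,X_n]$ is catenary for every $n\geq1$. That is precisely the condition that Lemma \ref{loc} requires.

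Putting the pieces together, for an arbitrary $n\geq1$ and arbitrary $P\in\QSpec^{\widetilde{\star}}(D)$ we have that $D_P[n]$ is catenary, so Lemma \ref{loc} yields that $D[n]$ is $\star[n]$-catenary. Since $n$ was arbitrary, $D$ is $\widetilde{\star}$-universally catenarian by definition, which is the desired conclusion.

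The only subtlety I anticipate is making the first reduction airtight, namely confirming that localizing at a quasi-$\widetilde{\star}$-prime genuinely produces a valuation domain. This is where I would lean on the standard local description of P$\star$MDs in terms of the $\widetilde{\star}$-structure, so that $D_P$ is a valuation domain exactly when $D$ is a P$\star$MD; the $\widetilde{\star}$-LFD hypothesis then upgrades ``valuation'' to ``finite-dimensional valuation.'' Once that local picture is established, the rest is a clean application of Lemma \ref{loc} together with the classical Pr\"ufer result, so I expect no further obstacles beyond the bookkeeping of which localizations to range over.
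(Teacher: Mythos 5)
Your proposal is correct and follows essentially the same route as the paper: localize via Lemma \ref{loc}, use the standard characterization of P$\star$MDs (the paper cites \cite[Theorem 3.1]{FJS}) to see that each $D_P$ with $P\in\QSpec^{\widetilde{\star}}(D)$ is a finite-dimensional valuation domain, and then invoke \cite[Theorem 12]{BF}. No substantive differences.
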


\begin{proof} We have to show that for each integer $n\geq1$, $D[n]$ is a $\star[n]$-catenarian domain.
To this end let $n\geq1$ be an integer and
$P\in\QSpec^{\widetilde{\star}}(D)$. So that $D_P$ is a finite
dimensional valuation domain by the hypothesis and \cite[Theorem
3.1]{FJS}. Hence $D_P[n]$ is catenary by \cite[Theorem 12]{BF}. Thus
$D[n]$ is $\star[n]$-catenary by Lemma \ref{loc}, for all $n\geq1$.
Hence $D$ is $\widetilde{\star}$-universally catenarian.
\end{proof}

\begin{prop} Let $D$ be an integral domain. If $D[X]$ is $\star[X]$-catenarian, then $D$ is
an $\widetilde{\star}$-strong S-domain.
\end{prop}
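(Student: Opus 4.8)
The plan is to localize and thereby reduce the statement to a purely classical fact about ordinary polynomial rings. First I would invoke Proposition \ref{SSD}: the domain $D$ is an $\widetilde{\star}$-strong S-domain if and only if $D_P$ is a (classical) strong S-domain for every $P\in\QSpec^{\widetilde{\star}}(D)$. Next, applying Lemma \ref{loc} with $n=1$, the hypothesis that $D[X]$ is $\star[X]$-catenary yields that $D_P[X]$ is catenary for every $P\in\QSpec^{\widetilde{\star}}(D)$. Hence the proposition will follow at once from the classical assertion that, for a domain $R$, if $R[X]$ is catenary then $R$ is a strong S-domain, applied to each $R=D_P$.

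For that classical assertion I would argue as follows. Since $R\cong R[X]/(X)$ and quotients of catenary rings are catenary, $R$ is itself a catenary domain, while $R[X]$ is a catenary domain by hypothesis. Let $P_1\subset P_2$ be adjacent primes of $R$; passing to $R/P_1$ (which is catenary, with catenary polynomial ring $R[X]/P_1[X]=(R/P_1)[X]$) reduces the task to showing that a height-one prime $P$ of a catenary domain $R$ with $R[X]$ catenary satisfies $\h(P[X])=1$, i.e.\ that no prime lies strictly between $(0)$ and $P[X]$. If $Q$ were such a prime, then $\,(0)\subseteq Q\cap R\subseteq P\,$ forces $Q\cap R=(0)$, because $Q\cap R=P$ would give $P[X]=PR[X]\subseteq Q$, contradicting $Q\subsetneq P[X]$; thus $Q$ is a proper upper to $(0)$, lying in the fiber $K[X]$ over $(0)$. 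The special chain theorem of Jaffard, which gives $\h(Q')=\h((Q'\cap R)[X])+\h(Q'/(Q'\cap R)[X])$ for every $Q'\in\Spec(R[X])$, combined with the one-dimensionality of the fibers $K[X]$ and $k(P)[X]$ and the catenarity of $R[X]$, then forces $\h(P[X])=\h(P)=1$, contradicting the existence of $Q$. Therefore $P[X]$ has height one, and $R$ is a strong S-domain.

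The main obstacle is exactly this height control in the classical step: the inequality $\h(P[X])\geq\h(P)$ is automatic, but the reverse inequality is precisely the strong S-property and genuinely fails for general domains. The entire force of the hypothesis is spent here, using the special chain theorem to convert catenarity of $R[X]$ into the equality $\h(P[X])=\h(P)$. Since this local statement is classical (cf.\ \cite{BDF, MM}), in the final write-up it can simply be cited, and the only work specific to the present setting is the localization bookkeeping provided by Lemma \ref{loc} and Proposition \ref{SSD}.
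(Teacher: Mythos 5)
Your proposal is correct and follows the paper's proof exactly: localize via Lemma \ref{loc} to get that $D_P[X]$ is catenary for each $P\in\QSpec^{\widetilde{\star}}(D)$, apply the classical fact that catenarity of $R[X]$ forces $R$ to be a strong S-domain (this is precisely \cite[Lemma 2.3]{BDF}, which the paper cites and which you rightly note can be cited rather than reproved), and conclude by Proposition \ref{SSD}. Your supplementary sketch of the classical lemma is somewhat looser than the standard argument (which compares the two saturated chains from $P_1[X]$ to $P_2[X]+(X)$ rather than invoking the special chain theorem), but since you defer to the citation this does not affect the proof.
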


\begin{proof} Using Lemma \ref{loc}, $D_P[X]$ is a catenarian domain for all
$P\in\QSpec^{\widetilde{\star}}(D)$. Hence $D_P$ is a strong
S-domain by \cite[Lemma 2.3]{BDF}. Thus $D$ is a
$\widetilde{\star}$-strong S-domain by Proposition \ref{SSD}.
\end{proof}

\begin{cor} Each $\widetilde{\star}$-universally catenarian domain
is an $\widetilde{\star}$-stably strong S-domain.
\end{cor}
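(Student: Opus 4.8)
The natural strategy is to reduce everything to the classical (local) theory via the two localization results already established. The plan is to unwind the definition of $\widetilde{\star}$-universally catenarian, pass to the localizations $D_P$ at quasi-$\widetilde{\star}$-primes using Lemma \ref{loc}, recognize these localizations as classically universally catenarian domains, invoke the known classical implication, and finally reassemble via Proposition \ref{SSSD}.

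Concretely, I would first assume that $D$ is $\widetilde{\star}$-universally catenarian. By definition this says that for every integer $n\geq1$ the polynomial ring $D[n]$ is $\star[n]$-catenary. The key reduction is then to apply Lemma \ref{loc}: since $D[n]$ is $\star[n]$-catenary for each $n\geq1$, that lemma yields that $D_P[n]$ is catenary for every $P\in\QSpec^{\widetilde{\star}}(D)$ and every $n\geq1$. Reading this across all $n$ for a fixed $P$, it says precisely that each localization $D_P$ (with $P$ a quasi-$\widetilde{\star}$-prime) is a universally catenarian domain in the ordinary sense.

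Next I would invoke the classical theorem that every universally catenarian integral domain is a stably strong S-domain (cf. \cite{BDF}); applying it to each $D_P$ shows that $D_P$ is a stably strong S-domain for every $P\in\QSpec^{\widetilde{\star}}(D)$. At this point the hypothesis $(2)$ of Proposition \ref{SSSD} is verified, so that proposition immediately gives that $D$ is an $\widetilde{\star}$-stably strong S-domain, which is the desired conclusion.

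The routine part is the bookkeeping of quantifiers, making sure that the range of $n$ and the range of $P$ in Lemma \ref{loc} match exactly the data needed to identify $D_P$ as classically universally catenarian and then to feed into Proposition \ref{SSSD}. The only genuine external input—and hence the step that carries the real content—is the classical implication \emph{universally catenarian} $\Rightarrow$ \emph{stably strong S-domain}; once that is cited correctly, the rest is a direct chaining of the localization characterizations, and no new computation over $D$ itself is required.
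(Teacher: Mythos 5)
Your argument is correct, but it is organized differently from the route the paper intends. The paper derives this corollary directly from the proposition immediately preceding it (``if $D[X]$ is $\star[X]$-catenarian, then $D$ is an $\widetilde{\star}$-strong S-domain''): one applies that proposition to the pair $(D[n],\star[n])$ for each $n$, noting that $D[n][X]=D[n+1]$ is $\star[n+1]$-catenarian by universal catenarity and that $\widetilde{\star[n]}=\star[n]$, so each $D[n]$ is a $\star[n]$-strong S-domain, which is exactly the definition of $\widetilde{\star}$-stably strong S-domain. You instead bypass that proposition: you localize once and for all via Lemma \ref{loc} to conclude that each $D_P$ ($P\in\QSpec^{\widetilde{\star}}(D)$) is classically universally catenarian, quote the classical implication of \cite{BDF} that universally catenarian domains are stably strong S-domains, and then globalize with Proposition \ref{SSSD}. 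Both proofs rest on the same machinery in the end (the paper's proposition is itself proved by Lemma \ref{loc} together with \cite[Lemma 2.3]{BDF} and the localization characterization of the $\widetilde{\star}$-strong S-property), so neither buys more generality; your version makes the reduction to the classical local statement explicit in one step, at the cost of invoking the full theorem of \cite{BDF} rather than only its one-variable lemma. Two small points you should make explicit: reading Lemma \ref{loc} ``across all $n$'' for a fixed $P$ is legitimate because the index set $\QSpec^{\widetilde{\star}}(D)$ does not depend on $n$; and the paper's definition of catenary already forces finite heights (saturated chains are required to have the same \emph{finite} length), so each $D_P$ is locally finite dimensional and the classical implication from \cite{BDF} applies without any additional hypothesis.
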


Therefore we have the following implications for finite
$\widetilde{\star}$-dimensional domains:
$$ \textsf{P$\star$MD}\Rightarrow\textsf{$\widetilde{\star}$-universally
catenary}\Rightarrow \textsf{$\widetilde{\star}$-stably strong
S-domain}\Rightarrow \textsf{$\widetilde{\star}$-Jaffard}.$$

Next we wish to present the semistar analogue of the celebrated
theorem of Ratliff \cite[Theorem 2.6]{Rat}.

\begin{thm} Let $D$ be an integral domain. Suppose that $D$ is $\widetilde{\star}$-Noetherian.
Then $D[X]$ is $\star[X]$-catenary if and only if $D$ is
$\widetilde{\star}$-universally catenarian.
\end{thm}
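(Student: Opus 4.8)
The plan is to reduce the statement to the classical theorem of Ratliff \cite[Theorem 2.6]{Rat} by passing to the localizations $D_P$ and invoking the catenarity criterion of Lemma \ref{loc}. The ``if'' direction is immediate from the definitions: if $D$ is $\widetilde{\star}$-universally catenarian, then $D[n]$ is $\star[n]$-catenary for every $n\geq1$, and taking $n=1$ (recalling that $\widetilde{\star}[X]=\star[X]$) gives exactly that $D[X]$ is $\star[X]$-catenary. So all the work lies in the converse.

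For the ``only if'' direction, I would start from the hypothesis that $D[X]$ is $\star[X]$-catenary. By Lemma \ref{loc} applied with $n=1$, this is equivalent to the assertion that $D_P[X]$ is catenary for every $P\in\QSpec^{\widetilde{\star}}(D)$. The key observation is that, since $D$ is $\widetilde{\star}$-Noetherian, each localization $D_P$ is a Noetherian domain by \cite[Proposition 3.8]{EFP}; this is precisely where the Noetherian hypothesis enters and makes the classical theory available fiber by fiber.

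With $D_P$ Noetherian and $D_P[X]$ catenary, I would apply Ratliff's theorem \cite[Theorem 2.6]{Rat} to each such $P$ to conclude that every $D_P$ is universally catenarian, so that $D_P[n]$ is catenary for all $n\geq1$ and all $P\in\QSpec^{\widetilde{\star}}(D)$. Finally, for each fixed $n$, running Lemma \ref{loc} in the reverse direction yields that $D[n]$ is $\star[n]$-catenary; since this holds for every $n\geq1$, $D$ is $\widetilde{\star}$-universally catenarian, as required. The argument is essentially formal once the two ingredients have been assembled, namely the localization equivalence of Lemma \ref{loc} and the Noetherianity of the $D_P$. The only point requiring care is that Lemma \ref{loc} be invoked uniformly over all $P\in\QSpec^{\widetilde{\star}}(D)$ for each fixed $n$; granting this, I expect no further obstacle, since the genuine content is absorbed into the two cited results.
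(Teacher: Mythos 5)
Your proposal is correct and follows essentially the same route as the paper's own proof: both directions reduce to Lemma \ref{loc}, use \cite[Proposition 3.8]{EFP} to get Noetherianity of each $D_P$, and then invoke Ratliff's theorem \cite[Theorem 2.6]{Rat} locally before applying Lemma \ref{loc} again in the reverse direction. No gaps.
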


\begin{proof} The ``if'' part is trivial. For the ``only if'' part, let $n\geq1$ be an integer and
$P\in\QSpec^{\widetilde{\star}}(D)$. So by Lemma \ref{loc}, $D_P[X]$
is catenary. Since $D_P$ is a Noetherian domain by \cite[Proposition
3.8]{EFP}, using the result of Ratliff \cite[Theorem 2.6]{Rat}, we
have $D_P$ is a universally catenarian domain. Therefore $D_P[n]$ is
a catenarian domain. Thus another use of Lemma \ref{loc} yields us
that $D[n]$ is a $\star[n]$-catenarian domain for all $n\geq1$.
Hence $D$ is an $\widetilde{\star}$-universally catenarian.
\end{proof}

\section{Characterizations of $\star$-quasi-Pr\"{u}fer domains}

In this section we give some characterization of
$\widetilde{\star}$-quasi-Pr\"{u}fer domains. First we need to
recall the definition of a semistar going-down domain. Let
$D\subseteq T$ be an extension of domains. Let $\star$ and
$\star'$ be semistar operations on $D$ and $T$, respectively.
Following \cite{DS}, we say that $D\subseteq T$ satisfies
$(\star,\star')$-$\GD$ if, whenever $P_0\subset P$ are
quasi-$\star$-prime ideals of $D$ and $Q$ is a
quasi-$\star'$-prime ideal of $T$ such that $Q\cap D=P$, there
exists a quasi-$\star'$-prime ideal $Q_0$ of $T$ such that
$Q_0\subseteq Q$ and $Q_0\cap D= P_0$. The integral domain $D$ is
said to be a \emph{$\star$-going-down domain} (for short, a
$\star$-$\GD$ {\it domain}) if, for every overring $T$ of $D$ and
every semistar operation $\star'$ on $T$, the extension
$D\subseteq T$ satisfies $(\star,\widetilde{\star'})$-$\GD$.
These concepts are the semistar versions of the ``classical''
concepts of going-down property and the going-down domains (cf.
\cite{DP}). It is known by \cite[Propositions 3.5 and 3.2(e)]{DS}
that every P$\star$MD and every integral domain $D$ with
$\star$-$\dim(D)=1$ is a $\star$-$\GD$ domain.

\begin{thm}\label{qJafgd} Let $D$ be an integral domain. Suppose that $D$ is a $\widetilde{\star}$-$\GD$ domain which is
$\widetilde{\star}$-$LFD$. Then the following statements are
equivalent:
\begin{itemize}
\item[(1)] $D$ is an $\widetilde{\star}$-universally catenarian domain.

\item[(2)] $D$ is an $\widetilde{\star}$-stably strong S-domain.

\item[(3)] $D$ is an $\widetilde{\star}$-strong S-domain.

\item[(4)] $D$ is an $\widetilde{\star}$-locally Jaffard domain.

\item[(5)] $D_M$ is a Jaffard domain for each $M\in\QMax^{\widetilde{\star}}(D)$.

\item[(6)] $D[X]$ is an $\star[X]$-catenarian domain.

\item[(7)] $D$ is an $\widetilde{\star}$-quasi-Pr\"{u}fer domain.
\end{itemize}
\end{thm}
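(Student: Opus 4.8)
The plan is to reduce the whole equivalence to the classical case $\star=d_D$ by localizing at the quasi-$\widetilde{\star}$-maximal ideals, and then to invoke the known theorem characterizing finite-dimensional going-down domains. The reduction machinery is already available: Propositions \ref{SSD}, \ref{SSSD}, \ref{caloc} and Lemma \ref{loc} all assert that the $\widetilde{\star}$-versions of the properties in question are detected on the fibers $D_M$, $M\in\QMax^{\widetilde{\star}}(D)$ (equivalently $D_P$, $P\in\QSpec^{\widetilde{\star}}(D)$). So the strategy is first to translate each of (1)--(7) into the corresponding classical property of the family $\{D_M\}_{M\in\QMax^{\widetilde{\star}}(D)}$, then to verify that the two standing hypotheses descend to each $D_M$, and finally to apply the classical equivalence fiberwise.

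First I would carry out the local translation. By Lemma \ref{loc} together with Proposition \ref{caloc}, condition (1) is equivalent to ``$D_M[n]$ is catenary for all $n\geq1$ and all $M$'', i.e.\ to ``$D_M$ is universally catenarian for every such $M$''; specializing to $n=1$ renders (6) equivalent to ``$D_M[X]$ is catenary for all $M$''. Proposition \ref{SSSD} makes (2) equivalent to ``$D_M$ is a stably strong S-domain for all $M$'', and Proposition \ref{SSD} makes (3) equivalent to ``$D_M$ is a strong S-domain for all $M$''. Condition (5) is literally ``$D_M$ is Jaffard for all $M$'', while (4) is ``$D_P$ is Jaffard for all $P\in\QSpec^{\widetilde{\star}}(D)$''; since each quasi-$\widetilde{\star}$-prime lies under a quasi-$\widetilde{\star}$-maximal ideal and $D_P$ is then a localization of the corresponding $D_M$, the two formulations coincide once one knows that Jaffardness passes to the relevant further localizations. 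Finally, localizing the $\widetilde{\star}$-quasi-Pr\"{u}fer property (as in \cite[Lemma 2.1]{CF}) shows (7) is equivalent to ``$D_M$ is quasi-Pr\"{u}fer for all $M$''.

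Next I would check that the hypotheses descend: if $D$ is $\widetilde{\star}$-LFD then each $D_M$ is finite dimensional, and if $D$ is a $\widetilde{\star}$-$\GD$ domain then each $D_M$ is a classical going-down domain. At this point all seven conditions have been reduced to the corresponding classical properties of a family of local, finite-dimensional going-down domains, and the theorem follows from the classical equivalence (the case $\star=d_D$ of the present statement): for a finite-dimensional going-down domain $R$, being universally catenarian, stably strong S, strong S, Jaffard, and quasi-Pr\"{u}fer are mutually equivalent. Several of the implied directions are already available globally and can be quoted directly, namely that a P$\star$MD, or a $\widetilde{\star}$-quasi-Pr\"{u}fer domain, is $\widetilde{\star}$-stably strong S, that (1) implies (2), that (2) implies (3) and (via the Remark) (4), and that (6) implies (3).

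The main obstacle is the classical local equivalence itself, and within it the implications that manufacture the quasi-Pr\"{u}fer conclusion (7) out of the a priori weaker chain conditions (3)--(5): these are precisely the steps that require the going-down hypothesis, since without it neither the Jaffard nor the strong S property forces a Pr\"{u}ferian integral closure. A secondary technical point to pin down carefully is the equivalence (4)$\Leftrightarrow$(5), which rests on the fact that in this setting a finite-dimensional going-down Jaffard (equivalently, locally Jaffard) domain stays Jaffard under localization, so that the quasi-maximal and quasi-prime versions genuinely agree.
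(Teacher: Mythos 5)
Your proposal is correct and follows essentially the same route as the paper: localize at the quasi-$\widetilde{\star}$-prime (equivalently quasi-$\widetilde{\star}$-maximal) ideals via Propositions \ref{SSD}, \ref{SSSD}, \ref{caloc} and Lemma \ref{loc}, check that the going-down and finite-dimensionality hypotheses descend to each $D_P$, and then invoke the classical Anderson--Bouvier--Dobbs--Fontana--Kabbaj equivalence for finite-dimensional going-down domains (the paper cites this as \cite[Theorem 1.13]{ABDFK} and organizes the fiberwise application as a cycle $(1)\Rightarrow(2)\Rightarrow\cdots\Rightarrow(7)\Rightarrow(1)$ rather than translating all seven conditions at once, which also lets it sidestep your worry about $(4)\Leftrightarrow(5)$ by routing through catenarity of $D_M[X]$, which localizes trivially).
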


\begin{proof} First of all we show that for each $P\in\QSpec^{\widetilde{\star}}(D)$, $D_P$ is a going-down domain.
Let $T$ be an overring of $D_P$. Suppose that $P_1D_P\subset P_2D_P$
are prime ideals of $D_P$ and $Q_2$ is a prime ideal of $T$ such
that $Q_2\cap D_P=P_2D_P$. Since $P_1\subset P_2$ are
quasi-$\widetilde{\star}$-prime ideals of $D$ (since they are
contained in $P$ and \cite[Lemma 4.1 and Remark 4.5]{FH}) and
$Q_2\cap D=P_2$ and the fact that $D$ is a $\widetilde{\star}$-$\GD$
domain, there exists a (quasi-$\widetilde{\star}$-)prime ideal $Q_2$
of $T$ satisfying both $Q_1\subseteq Q_2$ and $Q_1\cap D=P_1$. So
that $Q_1\cap D_P=P_1D_P$. Therefore $D_P$ is a going-down domain.

The implications $(1)\Rightarrow(2)$, $(2)\Rightarrow(3)$, and
$(4)\Rightarrow(5)$ are already known (see Section 3).

$(3)\Rightarrow(4)$. Let $P\in\QSpec^{\widetilde{\star}}(D)$.
Therefore $D_P$ is a going-down domain which is also a strong
S-domain by Proposition \ref{SSD}. Hence by \cite[Theorem
1.13]{ABDFK}, $D_P$ is a Jaffard domain. Thus $D$ is an
$\widetilde{\star}$-locally Jaffard.

$(5)\Rightarrow(6)$. Let $P\in\QSpec^{\widetilde{\star}}(D)$. Choose
a quasi-$\widetilde{\star}$-maximal ideal $M$ of $D$ containing $P$.
Since $D_M$ is a Jaffard domain which is also going-down, then
\cite[Theorem 1.13]{ABDFK} tells us that $D_M[X]$ is catenarian.
Thus $D_P[X]=(D_M[X])_{D\backslash P}$ is a catenarian domain. Now
by Lemma \ref{loc}, $D[X]$ is $\star[X]$-catenarian.

$(6)\Rightarrow(7)$. Let $P\in\QSpec^{\widetilde{\star}}(D)$. Using
Lemma \ref{loc}, $D_P[X]$ is catenarian. Since $D_P$ is a going-down
domain, using \cite[Theorem 1.13]{ABDFK}, we see that $D_P$ is a
quasi-Pr\"{u}fer domain. Thus by \cite[Lemma 2.1]{CF}, $D$ is a
$\widetilde{\star}$-quasi-Pr\"{u}fer domain.

$(7)\Rightarrow(1)$. Let $P\in\QSpec^{\widetilde{\star}}(D)$. Since
$D_P$ is a quasi-Pr\"{u}fer going-down domain, we have $D_P$ is a
universally catenarian domain by \cite[Theorem 1.13]{ABDFK}. Hence
$D_P[n]$ is a catenarian domain for each integer $n\geq1$. Therefore
using Lemma \ref{loc} we obtain that $D[n]$ is a
$\star[n]$-catenarian domain for all $n\geq1$. Hence $D$ is an
$\widetilde{\star}$-universally catenarian domain.
\end{proof}

\begin{cor} Let $D$ be an integral domain. Suppose that $\widetilde{\star}$-$\dim(D)=1$. Then the
following statements are equivalent:
\begin{itemize}
\item[(1)] $D$ is an $\widetilde{\star}$-universally catenarian domain.

\item[(2)] $D[X]$ is an $\star[X]$-catenarian domain.

\item[(3)] $D$ is an $\widetilde{\star}$-stably strong S-domain.

\item[(4)] $D$ is an $\widetilde{\star}$-strong S-domain.

\item[(5)] $D$ is an $\widetilde{\star}$-S-domain.

\item[(6)] $\star[X]$-$\dim(D[X])=2$.

\item[(7)] $D$ is an $\widetilde{\star}$-Jaffard domain.

\item[(8)] $D$ is an $\widetilde{\star}$-locally Jaffard domain.

\item[(9)] $D$ is an $\widetilde{\star}$-quasi-Pr\"{u}fer domain.
\end{itemize}
\end{cor}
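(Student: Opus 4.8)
The plan is to obtain the bulk of the equivalences directly from Theorem \ref{qJafgd} and then to attach the three conditions (5), (6), (7) by a single cycle. First I would verify that the standing hypotheses of Theorem \ref{qJafgd} hold under the present assumption $\widetilde{\star}$-$\dim(D)=1$. Since every quasi-$\widetilde{\star}$-prime of $D$ then has height at most one, $D$ is trivially $\widetilde{\star}$-LFD. Moreover, by the result recalled at the end of the introduction to Section 4 (any domain whose $\star$-dimension is one is a $\star$-$\GD$ domain), applied with $\widetilde{\star}$ in place of $\star$, the domain $D$ is a $\widetilde{\star}$-$\GD$ domain. Hence Theorem \ref{qJafgd} applies and yields at once the equivalence of conditions (1), (2), (3), (4), (8), (9), which are precisely its seven conditions (its local-Jaffard conditions both being our (8)). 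It then remains to weave (5), (6) and (7) into this equivalence class.

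For the easy arrows I would proceed as follows. The implication $(8)\Rightarrow(7)$ is the Remark of Section 2 (every $\widetilde{\star}$-locally Jaffard domain is $\widetilde{\star}$-Jaffard). The implication $(7)\Rightarrow(6)$ is immediate from the definition of an $\widetilde{\star}$-Jaffard domain at $n=1$, which forces $\star[X]$-$\dim(D[X])=\widetilde{\star}$-$\dim(D)+1=2$. The implication $(5)\Rightarrow(4)$ uses the hypothesis decisively: since any two distinct nonzero quasi-$\widetilde{\star}$-primes would produce a chain of height at least two, the only adjacencies with content for the $\widetilde{\star}$-strong S-condition are those with bottom $(0)$, i.e.\ pairs $(0)\subset P$ with $\hight(P)=1$, and for such a pair the strong S-condition reads exactly $\hight(P[X])=1$, which is the $\widetilde{\star}$-S-condition. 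Together with $(4)\Rightarrow(8)$ supplied by Theorem \ref{qJafgd}, these give the cycle $(8)\Rightarrow(7)\Rightarrow(6)\Rightarrow(5)\Rightarrow(4)\Rightarrow(8)$, so everything reduces to the single remaining arrow $(6)\Rightarrow(5)$.

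The heart of the argument, and the step I expect to be the main obstacle, is $(6)\Rightarrow(5)$, which I would prove contrapositively. Suppose $D$ is not a $\widetilde{\star}$-S-domain, so there is a height-one quasi-$\widetilde{\star}$-prime $P$ with $\hight(P[X])\geq2$. Using the description $\QSpec^{\star[X]}(D[X])=\Theta_1^{\star}\setminus\{0\}$, I would build a chain of three quasi-$\star[X]$-primes above $(0)$. Analysing contractions to $D$ (whose only possible values here are $(0)$ and, since $\widetilde{\star}$-$\dim(D)=1$, the prime $P$) together with the two relevant one-dimensional fibers, namely $K[X]$ over $(0)$ and $(D_P/PD_P)[X]$ over $P$, the assumption $\hight(P[X])\geq2$ produces an upper to zero $Q$ with $(0)\subset Q\subset P[X]$, while the one-dimensional fiber over $P$ supplies an upper $\mathfrak{Q}$ to $P$ with $P[X]\subset\mathfrak{Q}$ and $\mathfrak{Q}\cap D=P$. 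This gives the chain $(0)\subset Q\subset P[X]\subset\mathfrak{Q}$, and the delicate point is to check that $Q$, $P[X]$ and $\mathfrak{Q}$ all lie in $\Theta_1^{\star}$: for $Q$ because $Q\cap D=(0)$, and for $P[X]$ and $\mathfrak{Q}$ because their contraction is the quasi-$\star_f$-prime $P$, whence $P^{\star_f}\subsetneq D^{\star}$.

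The chain then forces $\star[X]$-$\dim(D[X])\geq3$. Since the general bounds of \cite[Theorem 3.1]{S} give $2\leq\star[X]$-$\dim(D[X])\leq3$ in our setting, we conclude $\star[X]$-$\dim(D[X])=3\neq2$, contradicting (6). This establishes $(6)\Rightarrow(5)$, closes the cycle, and, combined with the equivalences already furnished by Theorem \ref{qJafgd}, shows that all nine statements are equivalent.
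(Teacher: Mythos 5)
Your proposal is correct, but it stitches conditions (5), (6), (7) onto the main equivalence class by a genuinely different cycle than the paper uses. Both arguments start the same way: since $\widetilde{\star}$-$\dim(D)=1$, the domain $D$ is $\widetilde{\star}$-LFD and a $\widetilde{\star}$-$\GD$ domain, so Theorem \ref{qJafgd} gives $(1)\Leftrightarrow(2)\Leftrightarrow(3)\Leftrightarrow(4)\Leftrightarrow(8)\Leftrightarrow(9)$. From there the paper runs $(4)\Rightarrow(5)$ (trivial), proves $(5)\Rightarrow(6)$ \emph{directly} by showing every quasi-$\star[X]$-prime of $D[X]$ has height at most $2$ (via Kaplansky's Theorem 37), and closes the loop by importing $(6)\Leftrightarrow(7)$ and $(6)\Rightarrow(9)$ from \cite{S}. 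You run the cycle in the opposite direction, $(8)\Rightarrow(7)\Rightarrow(6)\Rightarrow(5)\Rightarrow(4)$, with the first two arrows being the Remark of Section 2 and the definition of $\widetilde{\star}$-Jaffard at $n=1$, and with the hard step being the contrapositive of $(6)\Rightarrow(5)$: a failure of the S-property produces the chain $(0)\subset Q\subset P[X]\subset\mathfrak{Q}$ and hence $\star[X]$-$\dim(D[X])\geq 3$. Your verification that $Q$, $P[X]$ and $\mathfrak{Q}$ all lie in $\Theta_1^{\star}$ (and that $Q$ must be an upper to zero because $\hight(P)=1$) is exactly the delicate point, and it is handled correctly. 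What your route buys is self-containedness — it replaces the two external citations to \cite{S} with a lower-bound chain construction — whereas the paper's route is shorter on the page because it proves the upper bound $\star[X]$-$\dim(D[X])\leq 2$ from (5) and then leans on imported results. One minor interpretive point in your $(5)\Rightarrow(4)$: you read the adjacency condition in the definition of $\widetilde{\star}$-strong S-domain as admitting pairs with bottom $(0)$; that is the reading forced by Proposition \ref{SSD} and the classical notion (under the strict reading that quasi-$\star$-primes are nonzero, (4) would be vacuous in dimension one and the implication would hold anyway), so nothing breaks.
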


\begin{proof} Observe that
$(1)\Leftrightarrow(2)\Leftrightarrow(3)\Leftrightarrow(4)\Leftrightarrow(8)\Leftrightarrow(9)$
by Theorem \ref{qJafgd}, since $D$ is a $\widetilde{\star}$-$\GD$
domain and $(4)\Rightarrow(5)$ is trivial.

$(5)\Rightarrow(6)$. By \cite[Theorem 3.1]{S}, we have
$2\leq\star[X]$-$\dim(D[X])$. Now let $P$ be a
quasi-$\widetilde{\star}$-prime ideal of $D$. Hence by the
hypothesis, $\hight(P)=1$. Since $D$ is a
$\widetilde{\star}$-S-domain, $\hight(P[X])=1$. Now let $Q$ be a
quasi-$\star[X]$-prime ideal of $D[X]$. Take a prime ideal $Q_1$
properly contained in $Q$. If it contracts to a
$P\in\QSpec^{\widetilde{\star}}(D)$, then by \cite[Theorem 37]{K},
$Q_1=P[X]$. If it contracts to zero, then by \cite[Theorem 37]{K},
we have $\hight(Q_1)\leq1$. Whence in either cases we have
$\hight(Q)\leq2$. Therefore $\star[X]$-$\dim(D[X])\leq2$. Thus
$\star[X]$-$\dim(D[X])=2$.

$(6)\Leftrightarrow(7)$ is true by \cite[Corollary 4.12]{S} and
$(6)\Rightarrow(9)$ by \cite[Theorem 3.5]{S}.
\end{proof}

Next we characterize $\widetilde{\star}$-quasi-Pr\"{u}fer domains by
the means of the properties of their overrings. Before that we need
to recall the definition of $(\star,\star')$-linked overrings. Let
$D$ be a domain and $T$ an overring of $D$. Let $\star$ and $\star'$
be semistar operations on $D$ and $T$, respectively. One says that
$T$ is \emph{$(\star,\star')$-linked to} $D$ (or that $T$ is a
$(\star,\star')${\it -linked overring of} $D$) if
$F^{\star}=D^{\star}\Rightarrow (FT)^{\star'}=T^{\star'}$, when $F$
is a nonzero finitely generated ideal of $D$ (cf. \cite{EF}).

Let $T$ be a $(\star,\star')$-linked overring of $D$. We will show
that the contraction map on prime spectra restricts to a well
defined function
$$
G:\QSpec^{\widetilde{\star'}}(T)\to \QSpec^{\widetilde{\star}}(D),
\, \, Q \mapsto Q \cap D
$$
of topological spaces which is continuous (with respect to the
subspace topology induced by the Zariski topology). If
$Q\in\QSpec^{\widetilde{\star'}}(T)$, then we show that $P:=G(Q)=Q
\cap D$ is a quasi-$\widetilde{\star}$-ideal of $D$. To this end
it suffices to show that $P^{\widetilde{\star}}\neq
D^{\widetilde{\star}}$. So suppose that
$P^{\widetilde{\star}}=D^{\widetilde{\star}}$. Then
$(PT)^{\widetilde{\star'}}=T^{\widetilde{\star'}}$. Since
$PT\subseteq Q$ we obtain that
$Q^{\widetilde{\star'}}=T^{\widetilde{\star'}}$, and hence $Q=T$,
which is a contradiction.

In the following theorem, let $\star'$ be a semistar operation for
an overring $T$ of $D$.

\begin{thm}\label{qJaf} Let $D$ be an integral domain. Suppose that $\widetilde{\star}$-$\dim(D)$ is finite. Then the
following statements are equivalent:
\begin{itemize}
\item[(1)] Each $(\star,\star')$-linked overring $T$ of $D$ is an
$\widetilde{\star'}$-universally catenarian domain.

\item[(2)] Each $(\star,\star')$-linked overring $T$ of $D$ is an
$\widetilde{\star'}$-stably strong S-domain.

\item[(3)] Each $(\star,\star')$-linked overring $T$ of $D$ is an
$\widetilde{\star'}$-strong S-domain.

\item[(4)] Each $(\star,\star')$-linked overring $T$ of $D$ is an
$\widetilde{\star'}$-Jaffard domain.

\item[(5)] Each $(\star,\star')$-linked overring $T$ of $D$ is an
$\widetilde{\star'}$-quasi-Pr\"{u}fer domain.

\item[(6)] $D$ is an $\widetilde{\star}$-quasi-Pr\"{u}fer domain.
\end{itemize}
\end{thm}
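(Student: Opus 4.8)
The plan is to reduce both directions to the classical overring–theoretic description of quasi-Pr\"{u}fer domains, using the map $G$ set up above to pass between $D$ and an arbitrary linked overring $T$, and an explicit family of linked overrings to pass between $D$ and the localizations $D_M$. First I would dispatch the forward implications $(6)\Rightarrow(i)$. Assume $D$ is $\widetilde{\star}$-quasi-Pr\"{u}fer and let $T$ be a $(\star,\star')$-linked overring. For $Q\in\QSpec^{\widetilde{\star'}}(T)$ the contraction $P:=G(Q)=Q\cap D$ lies in $\QSpec^{\widetilde{\star}}(D)$, and since $D\setminus P\subseteq T\setminus Q$ the ring $T_Q$ is an overring of $D_P$; because $D_P$ is quasi-Pr\"{u}fer by \cite[Lemma 2.1]{CF} and overrings of quasi-Pr\"{u}fer domains are again quasi-Pr\"{u}fer of no larger dimension, each $T_Q$ is a finite dimensional quasi-Pr\"{u}fer domain. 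Hence $T$ is $\widetilde{\star'}$-quasi-Pr\"{u}fer (again by \cite[Lemma 2.1]{CF}), which is $(5)$, and then $\widetilde{\star'}$-stably strong S by the corollary of Section 2, giving $(2)$, $(3)$ and, as $T$ is finite dimensional, the $\widetilde{\star'}$-Jaffard property $(4)$. Finally, each finite dimensional quasi-Pr\"{u}fer domain $T_Q$ is universally catenarian \cite{BDF}, so $T_Q[n]$ is catenary for every $n$; Lemma \ref{loc} then shows $T[n]$ is $\star'[n]$-catenary for all $n$, i.e. $T$ is $\widetilde{\star'}$-universally catenarian, which is $(1)$.

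The bridge for the converse is the following observation. Let $M\in\QMax^{\widetilde{\star}}(D)=\QMax^{\star_f}(D)$ and let $W$ be any overring of $D_M$. I claim that $W$, equipped with the identity operation $d_W$, is a $(\star,d_W)$-linked overring of $D$. Indeed, if $F\in f(D)$ satisfies $F^{\star}=D^{\star}$ then, $F$ being finitely generated, $F^{\star}=F^{\star_f}$; were $F\subseteq M$, we would obtain $D^{\star}=F^{\star_f}\subseteq M^{\star_f}\subseteq D^{\star}$, forcing $M=M^{\star_f}\cap D=D$, a contradiction, so $F\not\subseteq M$, whence $FD_M=D_M$ and $(FW)^{d_W}=FW=W=W^{d_W}$. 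This is exactly the linkedness condition, and since moreover $\widetilde{d_W}=d_W$, the operation $\widetilde{\star'}$ occurring in $(1)$--$(5)$ specializes to the ordinary (identity) situation on every such $W$.

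With this in hand I would close the converse. Fix $M\in\QMax^{\widetilde{\star}}(D)$; then $D_M$ is local with $\dim D_M=\hight(M)\leq\widetilde{\star}\text{-}\dim(D)<\infty$. Applying whichever of $(1)$--$(5)$ is assumed to the linked overrings $W$ supplied by the previous paragraph shows that \emph{every} overring of the finite dimensional local domain $D_M$ is, respectively, universally catenarian, stably strong S, strong S, Jaffard, or quasi-Pr\"{u}fer. By the classical description of finite dimensional quasi-Pr\"{u}fer domains in terms of their overrings \cite{BDF, FHP}, each of these conditions forces $D_M$ to be quasi-Pr\"{u}fer. As $M$ ranges over $\QMax^{\widetilde{\star}}(D)$, \cite[Lemma 2.1]{CF} then yields that $D$ is $\widetilde{\star}$-quasi-Pr\"{u}fer, which is $(6)$. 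I expect the main obstacle to be precisely this last classical input: the passage from ``every overring is Jaffard (or strong S)'' to quasi-Pr\"{u}fer is the one genuinely nonlocal fact, and the whole purpose of the $(\star,d_W)$-linkedness lemma is to let each hypothesis $(1)$--$(5)$ deliver it after localizing at a quasi-$\widetilde{\star}$-maximal ideal. The remaining points are routine bookkeeping: checking that $T_Q$ really is a finite dimensional overring of $D_P$, so that the local quasi-Pr\"{u}fer-to-universally-catenarian passage and Lemma \ref{loc} legitimately apply.
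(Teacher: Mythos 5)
Your proof is correct, and its overall architecture is the same as the paper's: localize at quasi-$\widetilde{\star}$-maximal (resp.\ quasi-$\widetilde{\star}$-prime) ideals, and reduce both directions to the classical overring-theoretic characterization of finite-dimensional quasi-Pr\"ufer domains together with Lemma~\ref{loc} and \cite[Lemma 2.1]{CF}. There are, however, two genuine differences in execution worth recording. First, for the converse implications you equip each overring $W$ of $D_M$ with the identity operation $d_W$ and verify $(\star,d_W)$-linkedness directly; since $\widetilde{d_W}=d_W$, the hypotheses $(1)$--$(5)$ immediately yield the \emph{classical} properties for every overring of $D_M$. The paper instead uses the induced operation $\star_{\iota}$ and must then prove that $\QSpec^{\widetilde{\star}_{\iota}}(T)\cup\{0\}=\Spec(T)$ in order to pass from the semistar property to the classical one; your choice of $d_W$ sidesteps that spectral computation and is arguably cleaner. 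Second, you prove $(4)\Rightarrow(6)$ and $(5)\Rightarrow(6)$ by the same bridge, and $(6)\Rightarrow(4),(5)$ from the quasi-Pr\"ufer property of the localizations $T_Q$, whereas the paper simply quotes \cite[Theorem 4.14]{S} for $(4)\Leftrightarrow(5)\Leftrightarrow(6)$; your route makes the theorem more self-contained at the cost of leaning a bit harder on the classical compendium (every overring Jaffard, resp.\ strong S, forces quasi-Pr\"ufer --- \cite[Theorem 6.7.8]{FHP}). Two small bookkeeping points: for $(6)\Rightarrow(1)$ the paper cites \cite{AC} for the universal catenarity of the overrings $T_{Q_0}$ of the quasi-Pr\"ufer domain $D_P$, while you cite \cite{BDF} for the universal catenarity of the finite-dimensional quasi-Pr\"ufer domains $T_Q$ themselves; both are legitimate since overrings of quasi-Pr\"ufer domains are quasi-Pr\"ufer of no larger dimension, as you note. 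And your claim that $T$ is ``finite dimensional'' in $(6)\Rightarrow(4)$ should be read as finiteness of $\widetilde{\star'}\text{-}\dim(T)$, which follows exactly as you indicate from $\dim(T_Q)\leq\dim(D_P)\leq\widetilde{\star}\text{-}\dim(D)$.
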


\begin{proof} Note that $(1)\Rightarrow(2)$ and
$(2)\Rightarrow(3)$ are trivial.

$(3)\Rightarrow(6)$. Let $M$ be a quasi-$\star_f$-maximal ideal of
$D$. We wish to show that $D_M$ is a quasi-Pr\"{u}fer domain.
Suppose that $T$ is an overring of $D_M$. Since $T$ is a
$(\star,\star_{\iota})$-linked overring of $D$, we have $T$ is a
$\widetilde{\star}_{\iota}$-strong S-domain by the hypothesis, where
$\iota$ is the canonical inclusion of $D$ into $T$. We want to show
that $\QSpec^{\widetilde{\star}_{\iota}}(T)\cup\{0\}=\Spec(T)$. So
let $Q$ be an arbitrary non-zero prime ideal of $T$, and set
$PD_M:=Q\cap D_M$, where $P\in\Spec(D)$ such that $P\subseteq M$.
Note that $P$ is a quasi-$\widetilde{\star}$-prime ideal of $D$,
since it is contained in $M$ and \cite[Lemma 4.1 and Remark
4.5]{FH}, and that $P=Q\cap D$. If
$Q^{\widetilde{\star}_{\iota}}=T^{\widetilde{\star}_{\iota}}$, that
is, if $Q^{\widetilde{\star}}=T^{\widetilde{\star}}$, then we have
$Q^{\widetilde{\star}}\cap D=D$. But
$$P=P^{\widetilde{\star}}\cap D=(Q\cap D)^{\widetilde{\star}}\cap
D=Q^{\widetilde{\star}}\cap D=D,$$ which is a contradiction.
Therefore $Q^{\widetilde{\star}_{\iota}}\neq
T^{\widetilde{\star}_{\iota}}$, and hence
$Q\in\QSpec^{\widetilde{\star}_{\iota}}(T)$ since
$\widetilde{\star}_{\iota}=\widetilde{(\widetilde{\star}_{\iota})}$
is a stable semistar operation of finite type, and so
$\QSpec^{\widetilde{\star}_{\iota}}(T)\cup\{0\}=\Spec(T)$. This
means that $T$ is a strong S-domain. Therefore thanks to
\cite[Theorem 6.7.8]{FHP}, $D_M$ is a quasi-Pr\"{u}fer domain. Hence
$D$ is a $\widetilde{\star}$-quasi-Pr\"{u}fer domain by \cite[Lemma
2.1]{CF}.

$(4)\Leftrightarrow(5)\Leftrightarrow(6)$ was proved in
\cite[Theorem 4.14]{S}.

$(6)\Rightarrow(1)$. Suppose that $T$ is a $(\star,\star')$-linked
overring of $D$. Let $n\geq1$ be an integer and
$Q\in\QSpec^{\star'[n]}(T[n])$. Set $Q_0=Q\cap T$ which is a
quasi-$\widetilde{\star'}$-prime ideal of $T$ (or equal to zero).
Then $P:=Q_0\cap D$ is a quasi-$\widetilde{\star}$-prime ideal of
$D$ by the observation before the theorem (or equal to zero). Thus
$D_P$ is a quasi-Pr\"{u}fer domain by \cite[Lemma 2.1]{CF}. Since
$D_P\subseteq T_{Q_0}$, we find that $T_{Q_0}$ is a universally
catenarian domain by \cite{AC}. Thus
$T[n]_Q=T_{Q_0}[n]_{QT_{Q_0}[n]}$ is a catenarian domain.
Consequently $T[n]$ is $\star'[n]$-catenary for each integer
$n\geq1$, that is $T$ is an $\widetilde{\star'}$-universally
catenarian domain.
\end{proof}

\begin{center} {\bf ACKNOWLEDGMENT}

\end{center}

I would like to thank Professor Marco Fontana for his comments on
this paper. I also thank the referee for several helpful remarks
concerning the final form of the paper.

\end{document}